 \newtheorem{thm}{Theorem}[section]
 \newtheorem{prop}[thm]{Proposition}
 \newtheorem{cor}[thm]{Corollary}
\theoremstyle{definition}
 \newtheorem{rem}[thm]{Remark}
 \newtheorem{definition}{Definition}[section]
\numberwithin{equation}{section}
\newtheorem{example}{\sc Example}
\newcommand\mj{\mbox{\bf 1}}
\newcommand\set{\mathbf{Set}}
\newcommand\rel{\mathbf{Rel}}
\newcommand\cmd{\mathbf{Cmd}}
\newcommand\relo{\mathbf{Rel}_\omega}
\newcommand\cob{\mathbf{Cob}}
\newcommand\mat{\mathbf{Mat}}
\newcommand\vect{\mathbf{Vect}}
\newcommand\fdvect{\mathbf{fdVect}}
\newcommand\fdhilb{\mathbf{fdHilb}}
\def\d#1{{#1\kern-0.4em\char"16\kern-0.1em}}
\def\D#1{{\raise0.2ex\hbox{-}\kern-0.4em#1}}
\def \Dj{\mbox{\raise0.3ex\hbox{-}\kern-0.4em D}}
\definecolor{britishracinggreen}{rgb}{0.0, 0.26, 0.15}
\title{Coherence for closed categories with biproducts}
\author[Petri\' c]{Zoran Petri\' c}
\address{\scriptsize{Mathematical Institute SANU\\ Knez Mihailova 36, p.f.\ 367\\ 11001 Belgrade, Serbia}}
\email{zpetric@mi.sanu.ac.rs}
\author[Zeki\' c]{Mladen Zeki\' c}
\address{\scriptsize{Mathematical Institute SANU\\ Knez Mihailova 36, p.f.\ 367\\ 11001 Belgrade, Serbia}}
\email{mzekic@mi.sanu.ac.rs}
\date{}
\begin{document}

\begin{abstract}
A coherence result for symmetric monoidal closed categories with biproducts is shown in this paper. It is also explained how to prove coherence for compact closed categories with biproducts and for dagger compact closed categories with dagger biproducts by using the same technique.

\noindent {\small {\it Mathematics Subject Classification} ({\it 2010}): 18D15, 18D20,
57Q20, 57R56, 03F07}

\noindent {\small {\it Keywords$\,$}: symmetric monoidal closed category, compact
closed
category, dagger category, enriched category, cobordism}
\end{abstract}

\maketitle

\section{Introduction}

The aim of this paper is to prove the following result:
\begin{quotation}
\emph{The category of 1-dimensional cobordisms, freely enriched over the category of commutative monoids and completed with respect to biproducts, provides a proper graphical language for closed categories with biproducts.}
\end{quotation}
This coherence result is formally stated through Theorems \ref{coh} and
\ref{coh1}-\ref{coh2} below. The first of these theorems treats the case of symmetric monoidal closed categories with biproducts. As in the case of symmetric monoidal closed categories, the commuting diagrams are restricted to those involving ``proper'' objects. This result says that every two canonical arrows from $a$ to $b$ (for $a$ and $b$ proper) with the same ``graphs'' are equal in such a category. However, the notion of the graph of an arrow is somewhat different in this case---it is a matrix whose entries are formal sums of graphs adequate for symmetric monoidal closed case (the Kelly-Mac Lane graphs). The second and the third theorem are analogous. They treat the cases of compact closed categories with biproducts and dagger compact closed categories with dagger biproducts. The main difference is that the latter results are not restricted to proper objects.

Coherence, as a category theoretical notion, finds its roots in the papers of Mac Lane, \cite{ML63}, and Stasheff, \cite{S63}. Since then, lots of coherence results have been proven and possible applications have been found in many fields of mathematics. We mention
just a few appearances of such results in category theory, \cite[XI.3,
Theorem~1]{ML71}, in mathematical linguistics and logic, \cite[Proposition~4]{L68}, in homotopy theory, \cite[Theorem~3.6]{BFSV}, \cite[Theorems 3.1-2]{PT13}, in combinatorics, \cite[Theorem~2.5]{K93}, \cite[Theorem~5.2]{BIP19}, in low-dimensional topology, \cite[Theorem~2.5]{T10} and in mathematical physics,~\cite{S09}.

As one can see from the examples above, coherence results are formulated in many different (sometimes hardly recognisable) forms. The approach to coherence in this paper is the one established in \cite{DP04}, namely, coherence for a category theorist is nothing but completeness for a logician. It stems from Kelly's attempt, \cite[Section~1.4, pp.\ 111-112]{K72}, to make uniform the notion of coherence, which is further
developed by Voreadou, \cite[Introduction, p. viii]{V77}, and Soloviev, \cite{S90}, \cite{S97}. According to this approach, on the side of syntax, we have a freely generated category $\mathcal{C}$ whose language and axiomatic commuting diagrams are specified, while on the side of semantics we have some kind of graphs (a graphical language), which
may be formalised as arrows of a category $\mathcal{D}$ of the ``same type'' as $\mathcal{C}$. Then, following \cite{DP04}, a coherence result may be stated as existence of a faithful functor from $\mathcal{C}$ to $\mathcal{D}$. Since one expects out of such a result a decision procedure for diagram commuting problem, it is desirable to have this
problem decidable in $\mathcal{D}$ (cf.\ the notion of \emph{manageable} category given in \cite[\S 1.1]{DP04}).

Traditionally, the graphs associated to arrows of closed categories are based on 1-dimensional manifolds (cf.\ \cite{KML71} and \cite{KL80}), while the graphs adequate for arrows of categories with products, coproducts and biproducts contain branchings (singularities) and hence are not manifolds (cf.\ \cite{S09}). These two graphical languages do not cooperate well, as it was noted in \cite[Section~3, last paragraph]{S07}. The main problem related to this discrepancy is to find a proper graphical language for cartesian closed categories, and it remains open. On the other hand, from the point of view of category theory, the closed structure goes perfectly well with biproducts---the former distributes over the latter. Also, there are lots of examples possessing both structures. However, the only coherence result we know from the literature, which treats closed categories with biproducts, is \cite[Theorem~21]{AD04}.

The structures investigated in this paper are of particular interest for researchers working in quantum information and computation (cf.\ \cite{AC04}, \cite{AD04}, \cite{S07} and \cite{H09}). Our interest for closed categories with biproducts is motivated by questions
arising from categorial proof theory. A recent research, \cite{PS}, in which both authors have participated, considers a sequent system with a connective that acts simultaneously as conjunction and disjunction. From the standpoint of categorial proof theory, such a connective corresponds to a biproduct.

We hope that our results could interact with research concerning the problem of full coherence for closed categories (see \cite{S90}, \cite{S97} and \cite{MS07}), where sometimes (cf.\ \cite[Lemma~2.7]{S97}) the role of biproducts is evident. The
language we cover in this paper includes basic notions used in homological algebra---potentially, our results can simplify some diagram chasing. Also, our approach opens up the possibility to construct other graphical languages for some more involved structures in order to extend a very systematic list given in \cite{S09}.

In the last section of the paper, we mention some open problems. A
possibility to switch from one type of graphs to another, in coherence result for closed categories with biproducts, by using topological quantum field theories seems to be of particular interest.


\section{Closed categories and biproducts}

A brief review of some categorial notions relevant for our results is given in this section. A \emph{symmetric monoidal category} is a category $\mathcal{A}$ equipped with a distinguished object $I$, a bifunctor $\otimes\colon \mathcal{A}\times \mathcal{A}\to \mathcal{A}$ and the natural isomorphisms $\alpha$, $\lambda$ and $\sigma$ with components $\alpha_{a,b,c}\colon a\otimes(b\otimes c)\to (a\otimes b)\otimes c$, $\lambda_a\colon I\otimes a\to a$ and $\sigma_{a,b}\colon a\otimes b\to b\otimes a$. Moreover, the coherence conditions concerning the arrows of $\mathcal{A}$ (see the equalities~\ref{19}-\ref{21} below) hold.

A \emph{symmetric monoidal closed category} is a symmetric monoidal category $\mathcal{A}$ in which for every object $a$ there is a right adjoint $a\multimap\colon \mathcal{A}\to \mathcal{A}$ to the functor $a\otimes$. A \emph{compact closed category} is a symmetric monoidal category in which every object $a$ has a \emph{dual} $a^\ast$ in the sense that there are arrows $\eta\colon I\to a^\ast\otimes a$ and $\varepsilon\colon a\otimes a^\ast\to I$ such that
\begin{equation}\label{triang}
(a^\ast\otimes \varepsilon)\circ\alpha^{-1}_{a^\ast,a,a^\ast}\circ (\eta\otimes a^\ast)=\sigma_{I,a^\ast},\quad (\varepsilon\otimes a)\circ\alpha_{a,a^\ast,a}\circ(a\otimes\eta)=\sigma_{a,I}.
\end{equation}
Every compact closed category is symmetric monoidal closed since $a^\ast\otimes$ is a right adjoint to $a\otimes$ for every object $a$ of such a category.

A \emph{dagger category} is a category $\mathcal{A}$ equipped with a functor $\dagger\colon \mathcal{A}^{op}\to \mathcal{A}$ such that for every object $a$ and every arrow $f$ of this category $a^\dagger=a$, and $f^{\dagger\dagger}=f$. (For more details see \cite{S07} and \cite{H09}.) A \emph{dagger compact closed category} is a compact closed category $\mathcal{A}$, which is also a dagger category satisfying

\begin{equation}\label{a}
   (f\otimes g)^\dagger=f^\dagger\otimes g^\dagger,
\end{equation}
\begin{equation}\label{b}
   \alpha_{a,b,c}^\dagger=\alpha^{-1}_{a,b,c},\quad \lambda_a^\dagger=\lambda^{-1}_a,\quad \sigma_{a,b}^\dagger=\sigma_{b,a},
\end{equation}
\begin{equation}\label{c}
   \sigma_{a,a^\ast}\circ\varepsilon^\dagger=\eta.
\end{equation}
This notion was introduced by Abramsky and Coecke, \cite{AC04}, under the name ``strongly compact closed category''. (For the reasons to switch to another terminology see \cite[Remark~2.7]{S07}.)

A \emph{zero object} (or a null object) in a category is an object which is both initial and terminal. If a category contains a zero object $0$, then for every pair $a$, $b$ of its objects, there is a composite $0_{a,b}\colon a\to 0\to b$. (For every other zero object $0'$ of this category, the composite $a\to 0'\to b$ is equal to $0_{a,b}$.) A  \emph{biproduct} of $a_1$ and $a_2$ in a category with a zero object consists of a coproduct and a product diagram
\[
a_1\stackrel{\iota^1\:}{\longrightarrow} a_1\oplus a_2 \stackrel{\:\iota^2}{\longleftarrow}a_2,\quad\quad\quad
a_1\stackrel{\:\pi^1}{\longleftarrow} a_1\oplus a_2 \stackrel{\pi^2\:}{\longrightarrow}a_2
\]
for which
\[
\pi^j\circ\iota^i= \left\{\begin{array}{ll}
    \mj_{a_i}, & i=j,
    \\[1ex]
    0_{a_i,a_j}, & \mbox{\rm otherwise}, \end{array} \right .
\]
where $i,j\in\{1,2\}$ (cf.\ the equalities \ref{13}-\ref{14} below).

More generally, a biproduct of a family of objects $\{a_j\mid j\in J\}$ consists of a universal cocone and a universal cone
\[
\{\iota^j\colon a_j\to B\mid j\in J\}, \quad\quad\quad \{\pi^j\colon B\to a_j\mid j\in J\}
\]
for which the above equality holds for all $i,j\in J$. A \emph{category with biproducts} is a category with zero object and biproducts for every pair of objects. Note that a category with biproducts has biproducts for all finite families of objects, but not necessary for infinite families of objects. A biproduct is a \emph{dagger biproduct} when $\iota^j=(\pi^j)^\dagger$, for every $j\in J$.

By defining $f+g$ for $f,g\colon a\to b$ as $\mu_b\circ(f\oplus g)\circ\bar{\mu}_a$, where $\mu_b\colon b\oplus b\to b$ is the \emph{codiagonal} map, and $\bar{\mu}_a\colon a\to a\oplus a$ is the \emph{diagonal} map tied to the coproduct $b\oplus b$ and to the product $a\oplus a$ one obtains an operation on the set of arrows from $a$ to $b$ which is commutative and has $0_{a,b}$ as neutral. Moreover, the composition distributes over $+$. Hence, every category with biproducts may be conceived as a category enriched over the category $\cmd$ of \emph{commutative monoids}.

\begin{example}
The category $\set$ of sets and functions is symmetric monoidal closed with $\otimes$ being the Cartesian product, and $X\multimap Y$ being the set of functions from $X$ to $Y$. More generally, every \emph{cartesian closed category} is symmetric monoidal closed. Even restricted to finite sets, $\set$ is not compact closed. There are no zero objects and biproducts in this category.
\end{example}

\begin{example}
The category $\set^\ast$ of pointed sets whose objects are sets each of which contains a distinguished element, and whose arrows are functions that preserve the distinguished element is symmetric monoidal closed with $\otimes$ being the \emph{smash} product (all the pairs having at least one component distinguished are identified into the distinguished element of the product) and $X\multimap Y$ being the set of all functions from $X$ to $Y$ that preserve the distinguished element, with the distinguished element being the function that maps each element of $X$ to the distinguished element of $Y$. Note that the smash product is not a product in $\set^\ast$, hence, the above structure is not cartesian closed. Also, this category is not compact closed. Every singleton is a zero object in $\set^\ast$, but this is not a category with biproducts.
\end{example}

\begin{example}
The category $\rel$ of sets and relations is dagger compact closed with dagger biproducts of all families of objects. The bifunctor $\otimes$ is the Cartesian product. For every relation $\rho$ its converse (transpose) is $\rho^\dagger$. Every object is self-dual. The arrow $\eta\colon \{\ast\}\to X\times X$ is the relation $\{(\ast,(x,x))\mid x\in X\}$, while $\varepsilon\colon X\times X\to \{\ast\}$ is its converse $\{((x,x),\ast)\mid x\in X\}$. The biproduct of a family of objects is given by their disjoint union and standard injections, while the converse of an injection is the corresponding projection. The category $\relo$ is the full subcategory of $\rel$ on finite ordinals. This category is also dagger compact closed with dagger biproducts.
\end{example}

\begin{example}
For any field $K$, the category $\vect_K$ of vector spaces over $K$ is symmetric monoidal closed with $\otimes$ being the usual tensor product and $V\multimap W$ being the vector space of linear transformations from $V$ to $W$. The zero object of $\vect_K$ is the trivial vector space and the biproduct of $V$ and $W$ is given by the direct sum $V\oplus W$. There are no biproducts of infinite families of non-zero vector spaces. The full subcategory $\fdvect_K$ of $\vect_K$ on finite dimensional vector spaces is compact closed. The usual dual space $V\multimap K$ plays the role of a dual $V^\ast$ of $V$ in $\fdvect_K$. For $(e_i)_{1\leq i\leq n}$ being a basis of $V$ and $(e^i)_{1\leq i\leq n}$ being its dual basis of $V^\ast$, the linear transformations $\eta\colon K\to V^\ast \otimes V$ and $\varepsilon\colon V\otimes V^\ast \to K$ are determined by
\[
\eta(1)=\sum_{i=1}^n e^i\otimes e_i, \quad\quad\quad \varepsilon(e_i\otimes e^j)=e^j(e_i)= \left\{\begin{array}{ll}
    1, & i=j,
    \\[1ex]
    0, & \mbox{\rm otherwise}. \end{array} \right .
\]
\end{example}

\begin{example}
The category $\fdhilb$ of finite dimensional Hilbert spaces (over $\mathbb{C}$) is dagger compact closed with dagger biproducts. For every arrow $f$ of this category, $f^\dagger$ is its unique adjoint determined by $\langle f(x),y\rangle =\langle x, f^\dagger(y)\rangle$.
\end{example}

\begin{example}\label{ex6}
For any \emph{rig} $(R,+,\cdot,0,1)$ ($(R,+,0)$ commutative monoid, $(R,\cdot,1)$ monoid, plus distributivity $x\cdot(y+z)=(x\cdot y)+(x\cdot z)$, $(y+z)\cdot x=(y\cdot x)+(z\cdot x)$, $0\cdot x=0=x\cdot 0$), consider the category $\mat_R$ whose objects are finite ordinals and arrows from $n$ to $m$ are $m\times n$ matrices over $R$, with matrix multiplication as composition. This category is dagger compact closed with dagger biproducts. The bifunctor $\otimes$ is given by the multiplication on objects and the Kronecker product on arrows. For every matrix $A$ over $R$, its transpose is $A^\dagger$. Every object is self-dual. The arrow $\eta\colon 1\to n\cdot n$ is the column with $n^2$ entries having $1$ at places indexed by $k\cdot n$, $0\leq k<n$ and $0$ at all the other places. The arrow $\varepsilon\colon n\cdot n\to 1$ is the transpose of $\eta$. The biproduct of $n$ and $m$ is given by the sum $n+m$, the injections
\[
\iota^1_{n,m}=\left(\begin{array}{c} E_n \\ 0 \end{array} \right)_{(n+m)\times n},
\quad\quad\quad
\iota^2_{n,m}=\left(\begin{array}{c} 0 \\ E_m \end{array} \right)_{(n+m)\times m},
\]
and the projections $\pi^1_{n,m}=(\iota^1_{n,m})^T$, $\pi^2_{n,m}=(\iota^2_{n,m})^T$.

For the rig $\mathbf{2}=(\{0,1\},+,\cdot,0,1)$ where $1+1=1$, i.e. the Boolean algebra with two elements, the category $\mat_{\mathbf{2}}$ is isomorphic, with respect to dagger compact closed and biproduct structure, to the category $\rel_\omega$. This isomorphism is the identity on objects. For our purposes, the category $\mat_{\mathbb{N}}$ for the rig structure on natural numbers is of particular interest. For any field $K$, the category $\mat_K$ is a skeleton of the category $\fdvect_K$.
\end{example}

\begin{example}\label{ex7}
The category $1\cob$ has as objects the finite sequences of points together with their orientation (either $+$ or $-$). Hence, an object of $1\cob$ is represented by a sequence of $+$ and $-$, e.g.\ $++-+--$. By a 1-\emph{manifold} we mean a compact oriented 1-dimensional topological manifold with boundary (a finite collection of oriented circles and line segments). For $a$, $b$ objects of $1\cob$, a 1-\emph{cobordism} from $a$ to $b$ is a triple $(M,f_0\colon a\to M, f_1\colon b\to M)$, where $M$ is a 1-manifold with boundary $\Sigma_0\coprod \Sigma_1$ whose orientation is induced from the orientation of $M$, the embedding $f_0\colon a\to M$ whose image is $\Sigma_0$ is orientation preserving, while the embedding $f_1\colon b\to M$ whose image is $\Sigma_1$ is orientation reversing. Two cobordisms $(M,f_0,f_1)$ and $(M',f'_0,f'_1)$ from $a$ to $b$ are
\emph{equivalent}, when there is an
orientation preserving homeomorphism $F:M\to M'$ such that the
following diagram commutes.
\begin{center}
\begin{picture}(120,60)

\put(0,30){\makebox(0,0){$a$}}
\put(60,55){\makebox(0,0){$M$}} \put(60,5){\makebox(0,0){$M'$}}
\put(120,30){\makebox(0,0){$b$}}
\put(25,50){\makebox(0,0){$f_0$}}
\put(95,50){\makebox(0,0){$f_1$}}
\put(25,10){\makebox(0,0){$f'_0$}}
\put(95,10){\makebox(0,0){$f'_1$}} \put(67,30){\makebox(0,0){$F$}}

\put(10,35){\vector(2,1){40}} \put(10,25){\vector(2,-1){40}}
\put(110,35){\vector(-2,1){40}} \put(110,25){\vector(-2,-1){40}}
\put(60,45){\vector(0,-1){30}}
\end{picture}
\end{center}
The arrows of $1\cob$ are the equivalence classes of 1-cobordisms. The identity $\mj_a\colon a\to a$ is represented by the cobordism $(a\times I, x\mapsto (x,0), x\mapsto (x,1))$, while $(M,f_0,f_1)\colon a\to b$ and $(N,g_0,g_1)\colon b\to c$ are composed by ``gluing'', i.e.\ by making the pushout of $M\stackrel{f_1}{\longleftarrow} b\stackrel{g_0}{\longrightarrow} N$.

The category $1\cob$ serves to us as a formalisation of Kelly-Mac Lane graphs introduced in \cite{KML71}. Actually, just the arrows of $1\cob$ free of closed 1-manifolds (circles) are sufficient for these matters, and even the orientation is not relevant. However, if one switches from symmetric monoidal closed categories to compact closed categories, the presence of closed components in 1-manifolds is essential (see \cite{KL80}). All the arrows of $1\cob$ are illustrated such that the source of an arrow is at the top, while its target is at the bottom of the picture, hence the direction of pictures is \emph{top to bottom} and not \emph{left to right} (e.g.\ \cite{K03}) or \emph{bottom to top} (e.g.\ \cite{T10}). We omit the orientation of arrows and objects in pictures when this is not essential.

The category $1\cob$ is dagger compact closed. We have (strict) symmetric monoidal structure on $1\cob$ in which $\otimes$ is given by disjoint union, i.e. by putting two cobordisms ``side by side''. Symmetry is generated by transpositions:
\begin{center}
\begin{tikzpicture}[scale=0.4][line cap=round,line join=round,x=1.0cm,y=1.0cm]
\draw [line width=1pt](0.0,4.0)-- (4.0,0.0);
\draw [line width=1pt](4.0,4.0)-- (0.0,-0.0);
\begin{scriptsize}
\draw [fill=black] (0.0,4.0) circle (3pt);
\draw [fill=black] (4.0,4.0) circle (3pt);
\draw [fill=black] (4.0,0.0) circle (3pt);
\draw [fill=black] (0.0,-0.0) circle (3pt);
\end{scriptsize}
\end{tikzpicture}
\end{center}
(Note that our manifolds are not embedded in the plane and we consider the above cobordism as the disjoint union of two line segments---just the embedding of the source and the target matters.)

The dual $a^{\ast}$ of an object $a$ is the same sequence of points with reversed orientation. For example, if $a=+--$, then $a^{\ast}=-++$. The arrows $\eta\colon \emptyset\to a^\ast\otimes a$ and $\varepsilon\colon a\otimes a^\ast\to \emptyset$, for $a$ as above are the cobordisms illustrated as:
\begin{center}
\begin{tikzpicture}[scale=0.3][line cap=round,line join=round,>=triangle 90,x=1.0cm,y=1.0cm]
\draw [line width=1pt][shift={(-1.0,0.0)}] plot[domain=3.141592653589793:6.283185307179586,variable=\t]({1.0*3.0*cos(\t r)+-0.0*3.0*sin(\t r)},{0.0*3.0*cos(\t r)+1.0*3.0*sin(\t r)});
\draw [line width=1pt][shift={(1.0,0.0)}] plot[domain=3.141592653589793:6.283185307179586,variable=\t]({1.0*3.0*cos(\t r)+-0.0*3.0*sin(\t r)},{0.0*3.0*cos(\t r)+1.0*3.0*sin(\t r)});
\draw [line width=1pt][shift={(3.0,0.0)}] plot[domain=-3.141592653589793:0.0,variable=\t]({1.0*3.0*cos(\t r)+-0.0*3.0*sin(\t r)},{0.0*3.0*cos(\t r)+1.0*3.0*sin(\t r)});
\draw (-4.85,1.5) node[anchor=north west] {$+$};
\draw (3.15,1.5) node[anchor=north west] {$+$};
\draw (5.15,1.5) node[anchor=north west] {$+$};
\draw (-2.85,1.5) node[anchor=north west] {$-$};
\draw (-0.85,1.5) node[anchor=north west] {$-$};
\draw (-20.85,-2) node[anchor=north west] {$-$};
\draw (-18.85,-2) node[anchor=north west] {$+$};
\draw (-16.85,-2) node[anchor=north west] {$+$};
\draw (-14.85,-2) node[anchor=north west] {$+$};
\draw (-12.85,-2) node[anchor=north west] {$-$};
\draw (-10.85,-2) node[anchor=north west] {$-$};
\draw (1.15,1.5) node[anchor=north west] {$-$};
\draw [line width=1pt][shift={(-17.0,-2.0)}] plot[domain=0.0:3.141592653589793,variable=\t]({1.0*3.0*cos(\t r)+-0.0*3.0*sin(\t r)},{0.0*3.0*cos(\t r)+1.0*3.0*sin(\t r)});
\draw [line width=1pt][shift={(-15.0,-2.0)}] plot[domain=0.0:3.141592653589793,variable=\t]({1.0*3.0*cos(\t r)+-0.0*3.0*sin(\t r)},{0.0*3.0*cos(\t r)+1.0*3.0*sin(\t r)});
\draw [line width=1pt][shift={(-13.0,-2.0)}] plot[domain=0.0:3.141592653589793,variable=\t]({1.0*3.0*cos(\t r)+-0.0*3.0*sin(\t r)},{0.0*3.0*cos(\t r)+1.0*3.0*sin(\t r)});
\begin{scriptsize}
\draw [fill=black] (0.0,-0.0) circle (4pt);
\draw [->,>=stealth][line width=1.3pt] (0.04,-0.4) -- (0,-0.2);
\draw [fill=black] (-2.0,0.0) circle (4pt);
\draw [->,>=stealth][line width=1.3pt] (-1.96,-0.4) -- (-2,-0.2);
\draw [fill=black] (-4.0,0.0) circle (4pt);
\draw [fill=black] (2.0,0.0) circle (4pt);
\draw [->,>=stealth][line width=1.3pt] (1.96,-0.4) -- (2,-0.2);
\draw [fill=black] (4.0,0.0) circle (4pt);
\draw [fill=black] (6.0,0.0) circle (4pt);
\draw [fill=black] (-10.0,-2.0) circle (4pt);
\draw [fill=black] (-12.0,-2.0) circle (4pt);
\draw [fill=black] (-14.0,-2.0) circle (4pt);
\draw [->,>=stealth][line width=1.3pt] (-14.04,-1.6) -- (-14,-1.8);
\draw [fill=black] (-16.0,-2.0) circle (4pt);
\draw [->,>=stealth][line width=1.3pt] (-15.96,-1.6) -- (-16,-1.8);
\draw [fill=black] (-18.0,-2.0) circle (4pt);
\draw [->,>=stealth][line width=1.3pt] (-17.96,-1.6) -- (-18,-1.8);
\draw [fill=black] (-20.0,-2.0) circle (4pt);
\end{scriptsize}
\end{tikzpicture}
\end{center}

The equalities~\ref{triang} (in their simplest form, when $a=+$) are illustrated as:
\begin{center}
\begin{tikzpicture}[scale=0.4][line cap=round,line join=round,x=1.0cm,y=1.0cm]
\draw [line width=1pt][shift={(1.0,0.0)}] plot[domain=0.0:3.141592653589793,variable=\t]({1.0*1.0*cos(\t r)+-0.0*1.0*sin(\t r)},{0.0*1.0*cos(\t r)+1.0*1.0*sin(\t r)});
\draw [line width=1pt][shift={(3.0,0.0)}] plot[domain=3.141592653589793:6.283185307179586,variable=\t]({1.0*1.0*cos(\t r)+-0.0*1.0*sin(\t r)},{0.0*1.0*cos(\t r)+1.0*1.0*sin(\t r)});
\draw [line width=1pt](0.0,-0.0)-- (0.0,-2.0);
\draw [line width=1pt](4.0,0.0)-- (4.0,2.0);
\draw [line width=1pt](7.0,1.0)-- (7.0,-1.0);
\draw (4.8,0.4) node[anchor=north west] {$=$};
\draw (3.35,3.1) node[anchor=north west] {$-$};
\draw (6.35,2.1) node[anchor=north west] {$-$};
\draw (6.35,-0.9) node[anchor=north west] {$-$};
\draw (2.8,0.65) node[anchor=north west] {$-$};
\draw (-1.2,0.65) node[anchor=north west] {$-$};
\draw (-0.65,-1.9) node[anchor=north west] {$-$};
\draw (0.8,0.65) node[anchor=north west] {$+$};
\begin{scriptsize}
\draw [fill=black] (0.0,-0.0) circle (3pt);
\draw [->,>=stealth][line width=1.3pt] (0,-0.3) -- (0,-0.1);
\draw [fill=black] (2.0,0.0) circle (3pt);
\draw [->,>=stealth][line width=1.3pt] (1.94,0.3) -- (2,0.1);
\draw [fill=black] (4.0,0.0) circle (3pt);
\draw [->,>=stealth][line width=1.3pt] (3.94,-0.3) -- (4,-0.1);
\draw [fill=black] (0.0,-2.0) circle (3pt);
\draw [fill=black] (4.0,2.0) circle (3pt);
\draw [->,>=stealth][line width=1.3pt] (4,1.7) -- (4,1.9);
\draw [fill=black] (7.0,1.0) circle (3pt);
\draw [->,>=stealth][line width=1.3pt] (7,0.7) -- (7,0.9);
\draw [fill=black] (7.0,-1.0) circle (3pt);
\end{scriptsize}
\end{tikzpicture} \hspace{1cm}
\begin{tikzpicture}[scale=0.4][line cap=round,line join=round,x=1.0cm,y=1.0cm]
\draw [line width=1pt][shift={(11.0,0.0)}] plot[domain=3.141592653589793:6.283185307179586,variable=\t]({1.0*1.0*cos(\t r)+-0.0*1.0*sin(\t r)},{0.0*1.0*cos(\t r)+1.0*1.0*sin(\t r)});
\draw [line width=1pt][shift={(13.0,0.0)}] plot[domain=0.0:3.141592653589793,variable=\t]({1.0*1.0*cos(\t r)+-0.0*1.0*sin(\t r)},{0.0*1.0*cos(\t r)+1.0*1.0*sin(\t r)});
\draw [line width=1pt](10.0,2.0)-- (10.0,0.0);
\draw [line width=1pt](14.0,0.0)-- (14.0,-2.0);
\draw (14.8,0.4) node[anchor=north west] {$=$};
\draw [line width=1pt](17.0,1.0)-- (17.0,-1.0);
\draw (9.35,3.1) node[anchor=north west] {$+$};
\draw (12.8,0.65) node[anchor=north west] {$+$};
\draw (16.4,-0.9) node[anchor=north west] {$+$};
\draw (16.4,2.1) node[anchor=north west] {$+$};
\draw (8.8,0.65) node[anchor=north west] {$+$};
\draw (13.4,-1.9) node[anchor=north west] {$+$};
\draw (10.8,0.65) node[anchor=north west] {$-$};
\begin{scriptsize}
\draw [fill=black] (10.0,0.0) circle (3pt);
\draw [->,>=stealth][line width=1.3pt] (10,0.3) -- (10,0.1);
\draw [fill=black] (12.0,0.0) circle (3pt);
\draw [->,>=stealth][line width=1.3pt] (11.94,-0.3) -- (12,-0.1);
\draw [fill=black] (14.0,0.0) circle (3pt);
\draw [->,>=stealth][line width=1.3pt] (13.94,0.3) -- (14,0.1);
\draw [fill=black] (10.0,2.0) circle (3pt);
\draw [fill=black] (14.0,-2.0) circle (3pt);
\draw [->,>=stealth][line width=1.3pt] (14,-1.7) -- (14,-1.9);
\draw [fill=black] (17.0,1.0) circle (3pt);
\draw [fill=black] (17.0,-1.0) circle (3pt);
\draw [->,>=stealth][line width=1.3pt] (17,-0.7) -- (17,-0.9);
\end{scriptsize}
\end{tikzpicture}
\end{center}

The cobordism $f^\dagger\colon b\to a$ is obtained by reversing the orientation of the 1-manifold representing the cobordism $f\colon a\to b$. (By reversing the orientation of $M$, the embedding $f_0\colon a\to M$ becomes orientation reversing, hence $a$ becomes the target of the obtained cobordism---analogously, $b$ becomes its source.) For example, if $f$ is illustrated at the left-hand side, then $f^\dagger$ is illustrated at the right-hand side of the following picture.
\begin{center}
\begin{tikzpicture}[scale=0.4][line cap=round,line join=round,x=1.0cm,y=1.0cm]
\draw [line width=1pt] (4,4)-- (2,1);
\draw [shift={(1.0,4.0)}, line width=1pt] plot[domain=3.141592653589793:6.283185307179586,variable=\t]({1.0*1.0*cos(\t r)+-0.0*1.0*sin(\t r)},{0.0*1.0*cos(\t r)+1.0*1.0*sin(\t r)});
\draw (1.35,5.1) node[anchor=north west] {$-$};
\draw (-0.65,5.1) node[anchor=north west] {$+$};
\draw (3.35,5.1) node[anchor=north west] {$+$};
\draw (1.35,1.1) node[anchor=north west] {$+$};
\begin{scriptsize}
\draw [fill=black] (0,4) circle (3pt);
\draw [fill=black] (2,4) circle (3pt);
\draw [->,>=stealth][line width=1.3pt] (1.94,3.7) -- (2,3.9);
\draw [fill=black] (4,4) circle (3pt);
\draw [fill=black] (2,1) circle (3pt);
\draw [->,>=stealth][line width=1.3pt] (2.2,1.3) -- (2.067,1.1);
\end{scriptsize}
\end{tikzpicture} \hspace{2.5cm}
\begin{tikzpicture}[scale=0.4][line cap=round,line join=round,x=1.0cm,y=1.0cm]
\draw [line width =1pt][shift={(1.0,0.0)}] plot[domain=0.0:3.141592653589793,variable=\t]({1.0*1.0*cos(\t r)+-0.0*1.0*sin(\t r)},{0.0*1.0*cos(\t r)+1.0*1.0*sin(\t r)});
\draw [line width =1pt] (2.0,3.0)-- (4.0,0.0);
\draw (1.35,4.1) node[anchor=north west] {$+$};
\draw (3.35,0.1) node[anchor=north west] {$+$};
\draw (-0.65,0.1) node[anchor=north west] {$+$};
\draw (1.35,0.1) node[anchor=north west] {$-$};
\begin{scriptsize}
\draw [fill=black] (0,0) circle (3pt);
\draw [->,>=stealth][line width=1.3pt] (0.04,0.2) -- (0,0.1);
\draw [fill=black] (2,0) circle (3pt);
\draw [fill=black] (4,0) circle (3pt);
\draw [->,>=stealth][line width=1.3pt] (3.8,0.3) -- (3.93,0.1);
\draw [fill=black] (2,3) circle (3pt);
\end{scriptsize}
\end{tikzpicture}
\end{center}
It is not hard to check that the equalities~\ref{a}-\ref{c} hold.
\end{example}

\section{SMCB categories}\label{sec3}

This section is devoted to an equational presentation of symmetric monoidal closed categories with biproducts. Our choice of the language, which is very important in such a situation, is the one that provides an easy approach to coherence. A SMCB \emph{category} $\mathcal{A}$ consists of a set of objects and a set of arrows. There are two functions (\emph{source} and \emph{target}) from the set of arrows to the set of objects of $\mathcal{A}$. For every object $a$ of $\mathcal{A}$ there is the identity arrow $\mj_a\colon a\to a$. The set of objects includes two distinguished objects $I$ and $0$. Arrows $f\colon a\to b$ and $g\colon b\to c$ \emph{compose} to give $g\circ f\colon a\to c$, and arrows $f_1,f_2\colon a\to b$ \emph{add} to give $f_1+f_2\colon a\to b$. For every pair of objects $a$ and $b$ of $\mathcal{A}$, there are the objects $a\otimes b$, $a\oplus b$ and $a\multimap b$. Also, for every pair of arrows $f\colon a\to a'$ and $g\colon b\to b'$ there are the arrows $f\otimes g\colon a\otimes b\to a'\otimes b'$, $f\oplus g\colon a\oplus b\to a'\oplus b'$ and $a\multimap g\colon a\multimap b\to a\multimap b'$. In $\mathcal{A}$ we have the following families of arrows indexed by its objects.
\begin{align*}
&\alpha_{a,b,c} \colon a\otimes(b\otimes c)\to (a\otimes b)\otimes c, \quad &&\alpha^{-1}_{a,b,c}\colon (a\otimes b)\otimes c\to a\otimes(b\otimes c),
\\[1ex]
& \lambda_a \colon I\otimes a\to a, \quad &&\lambda^{-1}_a\colon a\to I\otimes a,
\\[1ex]
& \sigma_{a,b}\colon a\otimes b\to b\otimes a,
\\[1ex]
& \eta_{a,b} \colon b\to a\multimap(a\otimes b), && \varepsilon_{a,b} \colon a\otimes(a\multimap b)\to b,
\\[1ex]
& \iota^1_{a,b}\colon a\to a\oplus b,\quad && \iota^2_{a,b}\colon b\to a\oplus b,
\\[1ex]
& \pi^1_{a,b} \colon a\oplus b\to a,\quad && \pi^2_{a,b}\colon a\oplus b\to b,
\\[1ex]
& 0_{a,b} \colon a\to b.
\end{align*}

The arrows of $\mathcal{A}$ should satisfy the following equalities:
\begin{equation}\label{1}
   f\circ \mj_a=f=\mj_{a'}\circ f,\quad (h\circ g)\circ f=h\circ(g\circ f),
\end{equation}
\begin{equation}\label{2}
   \mj_a\otimes \mj_b=\mj_{a\otimes b},\quad (f_2\otimes g_2)\circ(f_1\otimes g_1)= (f_2\circ f_1)\otimes(g_2\circ g_1),
\end{equation}
\begin{equation}\label{3}
   \mj_a\oplus \mj_b=\mj_{a\oplus b},\quad (f_2\oplus g_2)\circ(f_1\oplus g_1)= (f_2\circ f_1)\oplus(g_2\circ g_1),
\end{equation}
\begin{equation}\label{4}
   a\multimap \mj_b=\mj_{a\multimap b},\quad (a\multimap g_2)\circ(a\multimap g_1)= a\multimap(g_2\circ g_1),
\end{equation}
\begin{equation}\label{5}
   \begin{array}{c}((f\otimes g)\otimes h)\circ\alpha_{a,b,c}= \alpha_{a',b',c'}\circ(f\otimes(g\otimes h)),\\[1ex] \alpha^{-1}_{a,b,c}\circ\alpha_{a,b,c}=\mj_{a\otimes(b\otimes c)},\quad \alpha_{a,b,c}\circ\alpha^{-1}_{a,b,c}=\mj_{(a\otimes b)\otimes c},
   \end{array}
\end{equation}
\begin{equation}\label{6}
   f\circ\lambda_a=\lambda_{a'}\circ(I\otimes f),\quad \lambda^{-1}_a\circ\lambda_a=\mj_{I\otimes a},\quad \lambda_a\circ\lambda^{-1}_a=\mj_a,
\end{equation}
\begin{equation}\label{7}
   (g\otimes f)\circ\sigma_{a,b}=\sigma_{a',b'}\circ(f\otimes g),\quad \sigma_{b,a}\circ\sigma_{a,b}=\mj_{a\otimes b},
\end{equation}
\begin{equation}\label{8}
   (a\multimap(a\otimes g))\circ\eta_{a,b}=\eta_{a,b'}\circ g,
\end{equation}
\begin{equation}\label{9}
   g\circ\varepsilon_{a,b}= \varepsilon_{a,b'}\circ (a\otimes(a\multimap g)),
\end{equation}
\begin{equation}\label{10}
   (f\oplus g)\circ\iota^1_{a,b}=\iota^1_{a',b'}\circ f,\quad (f\oplus g)\circ\iota^2_{a,b}=\iota^2_{a',b'}\circ g,
\end{equation}
\begin{equation}\label{11}
   f\circ\pi^1_{a,b}=\pi^1_{a',b'}\circ (f\oplus g),\quad g\circ\pi^2_{a,b}=\pi^2_{a',b'}\circ (f\oplus g),
\end{equation}
\begin{equation}\label{12}
   (a\multimap\varepsilon_{a,b})\circ\eta_{a,a\multimap b}=\mj_{a\multimap b}, \quad \varepsilon_{a,a\otimes b}\circ(a\otimes\eta_{a,b})=\mj_{a\otimes b},
\end{equation}
\begin{equation}\label{13}
   \pi^1_{a,b}\circ\iota^1_{a,b}=\mj_a,\quad \pi^2_{a,b}\circ\iota^2_{a,b}=\mj_b,
\end{equation}
\begin{equation}\label{14}
   \pi^2_{a,b}\circ\iota^1_{a,b}=0_{a,b},\quad \pi^1_{a,b}\circ\iota^2_{a,b}=0_{b,a},
\end{equation}
\begin{equation}\label{15}
   \iota^1_{a,b}\circ \pi^1_{a,b} + \iota^2_{a,b}\circ \pi^2_{a,b}=\mj_{a\oplus b},
\end{equation}
\begin{equation}\label{16}
   f_1+(f_2+f_3)=(f_1+f_2)+f_3,\quad f_1+f_2=f_2+f_1,\quad f+0_{a,a'}=f,
\end{equation}
\begin{equation}\label{17}
   (g_1+g_2)\circ f=g_1\circ f + g_2\circ f,\quad g\circ(f_1+f_2)=g\circ f_1 + g\circ f_2,
\end{equation}
\begin{equation}\label{18}
   0_{a',b}\circ f=0_{a,b},\quad f\circ 0_{b,a}=0_{b,a'}.
\end{equation}
\begin{equation}\label{19}
   \alpha_{a\otimes b,c,d}\circ\alpha_{a,b,c\otimes d}=(\alpha_{a,b,c}\otimes d)\circ\alpha_{a,b\otimes c,d}\circ (a\otimes\alpha_{b,c,d}),
\end{equation}
\begin{equation}\label{20}
   \lambda_{a\otimes b}=(\lambda_a\otimes b)\circ\alpha_{I,a,b},
\end{equation}
\begin{equation}\label{21}
   \alpha_{c,a,b}\circ\sigma_{a\otimes b,c}\circ \alpha_{a,b,c}= (\sigma_{a,c}\otimes b)\circ \alpha_{a,c,b} \circ (a\otimes \sigma_{b,c}),
\end{equation}
\begin{equation}\label{22}
   0_{0,0}=\mj_0.
\end{equation}

The equalities \ref{1} say that $\mathcal{A}$ is a category. The equalities \ref{2}-\ref{4} say that $\otimes$ and $\oplus$ are bifunctors, while $a\multimap$ is a functor. The equalities \ref{5}-\ref{7} say that $\alpha$, $\lambda$ and $\sigma$ are natural isomorphisms. The equalities \ref{8}-\ref{11} say that $\eta_a$, $\varepsilon_a$, $\iota$ and $\pi$ are natural. The equalities \ref{12} are \emph{triangular} equalities. The equalities \ref{13}-\ref{15} are \emph{biproduct} equalities, while the equalities \ref{16}-\ref{18} say that $\mathcal{A}$ is enriched over the category \textbf{Cmd}. The coherence conditions are contained in \ref{19}-\ref{22}.

The equalities \ref{1}, \ref{2}, \ref{5}-\ref{7}, \ref{19}-\ref{21} say that $\mathcal{A}$ is a \emph{symmetric monoidal} category. From \ref{2}, \ref{4}, \ref{8}-\ref{9}, \ref{12}, with the help of \cite[IV.1, Theorem~2(v)]{ML71}, it follows that for every $a$, the functor $a\multimap$ is a right adjoint to the functor $a\otimes$, hence $\mathcal{A}$ is \emph{symmetric monoidal closed}.

Since for every object $a$ of $\mathcal{A}$ the arrows $0_{0,a}$ and $0_{a,0}$ exist, with the help of \ref{18} and \ref{22}, one may conclude that $0$ is a zero object, i.e.\ an initial and a terminal object of $\mathcal{A}$. The following proposition, together with \ref{13}-\ref{14} shows that $\mathcal{A}$ is equipped with biproducts.

\begin{prop}
For every $a$ and $b$,
\[
a\stackrel{\iota^1_{a,b}\:}{\longrightarrow} a\oplus b \stackrel{\:\iota^2_{a,b}}{\longleftarrow}b,\quad\quad\quad
a\stackrel{\:\pi^1_{a,b}}{\longleftarrow} a\oplus b \stackrel{\pi^2_{a,b}\:}{\longrightarrow}b
\]
are coproduct and product diagrams in $\mathcal{A}$, respectively.
\end{prop}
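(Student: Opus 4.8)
The plan is to verify the two universal properties directly, drawing only on the biproduct equalities \ref{13}--\ref{15}, the $\cmd$-enrichment equalities \ref{16}--\ref{18}, and the category axioms \ref{1}; notably, bifunctoriality of $\oplus$ plays no role.

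First I would treat the product diagram. Given arrows $f\colon c\to a$ and $g\colon c\to b$ with common domain $c$, I would propose the mediating arrow $h=(\iota^1_{a,b}\circ f)+(\iota^2_{a,b}\circ g)\colon c\to a\oplus b$. Distributing composition over $+$ via \ref{17} and reassociating via \ref{1} yields $\pi^1_{a,b}\circ h=(\pi^1_{a,b}\circ\iota^1_{a,b})\circ f+(\pi^1_{a,b}\circ\iota^2_{a,b})\circ g$; by \ref{13} and \ref{14} this is $\mj_a\circ f+0_{b,a}\circ g$, and \ref{18} together with \ref{16} collapses it to $f+0_{c,a}=f$. Symmetrically $\pi^2_{a,b}\circ h=g$. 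For uniqueness, if $h'$ satisfies $\pi^1_{a,b}\circ h'=f$ and $\pi^2_{a,b}\circ h'=g$, then $h'=\mj_{a\oplus b}\circ h'$, and expanding $\mj_{a\oplus b}$ via \ref{15} and distributing via \ref{17} gives $h'=\iota^1_{a,b}\circ(\pi^1_{a,b}\circ h')+\iota^2_{a,b}\circ(\pi^2_{a,b}\circ h')=\iota^1_{a,b}\circ f+\iota^2_{a,b}\circ g=h$.

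The coproduct diagram is handled dually. For $f\colon a\to c$ and $g\colon b\to c$ I would take $k=(f\circ\pi^1_{a,b})+(g\circ\pi^2_{a,b})$; the identities $k\circ\iota^1_{a,b}=f$ and $k\circ\iota^2_{a,b}=g$ follow from the same use of \ref{13}, \ref{14}, \ref{18}, \ref{16}, \ref{17}, and uniqueness follows by writing a competitor $k'$ as $k'\circ\mj_{a\oplus b}$ and expanding with \ref{15} and \ref{17}.

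I do not expect a genuine obstacle: this is the classical fact that a biproduct splitting in a $\cmd$-enriched category is automatically both a product and a coproduct. The only care needed is the bookkeeping of which enrichment equality among \ref{16}--\ref{18} is invoked at each step, and the observation that \ref{15} supplies both the canonical mediating arrow and its uniqueness, whereas mere existence of a mediating arrow needs only \ref{13} and \ref{14}.
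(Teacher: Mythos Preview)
Your proof is correct and follows essentially the same approach as the paper: both construct the mediating arrows as $\iota^1\circ f+\iota^2\circ g$ for the product and $f\circ\pi^1+g\circ\pi^2$ for the coproduct, relying on \ref{13}--\ref{18}. The paper's proof merely states these formulas without spelling out the verification or uniqueness steps, whereas you supply those details explicitly.
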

\begin{proof}
For $f\colon a\to c$ and $g\colon b\to c$, the unique arrow $h\colon a\oplus b\to c$ such that $h\circ\iota^1=f$ and $h\circ\iota^2=g$ is obtained as $f\circ\pi^1+g\circ\pi^2$. Dually, for $f\colon c\to a$ and $g\colon c\to b$, the unique arrow $h\colon c\to a\oplus b$ such that $\pi^1\circ h=f$ and $\pi^2\circ h=g$ is obtained as $\iota^1\circ f +\iota^2\circ g$. Note that the uniqueness of $h$ in both cases follows from \ref{15}, and also this equality is necessary for the uniqueness of $h$ in either case, e.g.\ it follows from the uniqueness of $h$ in the first case when we instantiate $f$ by $\iota^1_{a,b}$ and $g$ by~$\iota^2_{a,b}$.
\end{proof}
Hence, every SMCB category is symmetric monoidal closed with biproducts. On the other hand, it is straightforward to check that every symmetric monoidal closed category with biproducts has the SMCB structure.

By defining $f\multimap b\colon a'\multimap b\to a\multimap b$, for $f\colon a\to a'$, as
\[
(a\multimap\varepsilon_{a',b})\circ (a\multimap(f\otimes(a'\multimap b)))\circ \eta_{a,a'\multimap b}
\]
one obtains a bifunctor $\multimap\colon \mathcal{A}^{op}\times \mathcal{A}\to \mathcal{A}$ (see \cite[IV.7, Theorem~3]{ML71}). In this way, $\eta$ and $\varepsilon$ become dinatural, i.e.\ the following two equalities hold.
\begin{equation}\label{23}
   (a\multimap(f\otimes b))\circ\eta_{a,b}=(f\multimap(a'\otimes b))\circ \eta_{a',b},
\end{equation}
\begin{equation}\label{24}
   \varepsilon_{a,b}\circ(a\otimes(f\multimap b))=\varepsilon_{a',b}\circ(f\otimes(a'\multimap b)).
\end{equation}

By \cite[V.5, Theorem~1]{ML71} and its dual we have the following results.
\begin{prop}
For every $a$, $b$ and $c$,
\[
c\multimap a\stackrel{\:c\multimap\pi^1_{a,b}}{\xleftarrow{\hspace*{2.4em}}} c\multimap(a\oplus b) \stackrel{c\multimap\pi^2_{a,b}\:}{\xrightarrow{\hspace*{2.4em}}}c\multimap b,\quad\quad c\otimes a\stackrel{c\otimes\iota^1_{a,b}\:}{\xrightarrow{\hspace*{2em}}} c\otimes(a\oplus b) \stackrel{\:c\otimes\iota^2_{a,b}}{\xleftarrow{\hspace*{2em}}}c\otimes b
\]
are product and coproduct diagrams in $\mathcal{A}$, respectively, while $c\multimap 0$ and $c\otimes 0$ are zero objects.
\end{prop}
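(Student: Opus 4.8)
The plan is to deduce everything from a single fact established earlier in this section: for every object $c$ the functor $c\otimes-$ has $c\multimap-$ as a right adjoint. Thus $c\otimes-$ is a left adjoint and $c\multimap-$ a right adjoint, and I would invoke the preservation results already cited, that a right adjoint preserves all limits existing in its domain (\cite[V.5, Theorem~1]{ML71}) and, dually, that a left adjoint preserves all colimits, applying them to the limits and colimits supplied by the first proposition of this section and by the observation, recorded just before it, that $0$ is a zero object.

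First I would apply the right adjoint $c\multimap-$ to the product cone $a\stackrel{\pi^1_{a,b}}{\longleftarrow}a\oplus b\stackrel{\pi^2_{a,b}}{\longrightarrow}b$; since a right adjoint carries a limiting cone to a limiting cone componentwise, the cone $c\multimap a\stackrel{c\multimap\pi^1_{a,b}}{\longleftarrow}c\multimap(a\oplus b)\stackrel{c\multimap\pi^2_{a,b}}{\longrightarrow}c\multimap b$ is a product diagram, which is the left-hand assertion. Dually, applying the left adjoint $c\otimes-$ to the coproduct cone $a\stackrel{\iota^1_{a,b}}{\longrightarrow}a\oplus b\stackrel{\iota^2_{a,b}}{\longleftarrow}b$ yields the coproduct diagram on the right. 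Viewing the terminal object $0$ as the limit of the empty diagram, $c\multimap-$ preserves it, so $c\multimap 0$ is terminal; dually $c\otimes 0$ is initial.

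It remains to upgrade ``terminal'' and ``initial'' to ``zero object'', the only step not of pure adjoint-functor type, but still routine: if $t$ is terminal then $\mathcal{A}(t,t)=\{\mj_t\}$ is a singleton, so $\mj_t=0_{t,t}$, and then for any $f\colon t\to x$ one has $f=f\circ\mj_t=f\circ 0_{t,t}=0_{t,x}$ by \ref{18}, so $\mathcal{A}(t,x)$ is a singleton and $t$ is initial as well; the dual argument turns an initial object into a terminal one. Applying this to $t=c\multimap 0$ and to $c\otimes 0$ finishes the proof. I do not expect a genuine obstacle; the only thing to be careful about is the bookkeeping that ``preservation of the limiting cone, componentwise'' delivers exactly the structure maps $c\multimap\pi^i_{a,b}$ and $c\otimes\iota^i_{a,b}$ appearing in the statement.
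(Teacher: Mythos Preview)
Your proposal is correct and follows exactly the paper's approach: the paper simply states the proposition as a consequence of \cite[V.5, Theorem~1]{ML71} and its dual, without a separate proof environment. Your write-up is actually more complete, since you spell out the small extra step---upgrading ``terminal'' and ``initial'' to ``zero object'' via $\mj_t=0_{t,t}$ and \ref{18}---that the paper leaves implicit.
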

\begin{cor}
For every $a$, $b$ and $c$,
\[
c\multimap(a\oplus b)\cong (c\multimap a)\oplus(c\multimap b),\quad c\otimes(a\oplus b)\cong (c\otimes a)\oplus(c\otimes b),\quad c\multimap 0\cong 0\cong c\otimes 0.
\]
\end{cor}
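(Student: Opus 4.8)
The plan is to read the corollary off the preceding Proposition, together with the standard fact that products, coproducts and zero objects, when they exist, are determined up to canonical isomorphism.

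First I would recall the uniqueness statement in the form I want to use it: if $x \xleftarrow{\,p^1\,} P \xrightarrow{\,p^2\,} y$ and $x \xleftarrow{\,q^1\,} Q \xrightarrow{\,q^2\,} y$ are both product diagrams, then the arrow $h\colon P\to Q$ with $q^1\circ h=p^1$ and $q^2\circ h=p^2$ is an isomorphism, its inverse being the arrow $k\colon Q\to P$ with $p^1\circ k=q^1$, $p^2\circ k=q^2$; indeed $k\circ h$ satisfies the same two equations as $\mj_P$ and $h\circ k$ the same two as $\mj_Q$, so uniqueness forces $k\circ h=\mj_P$ and $h\circ k=\mj_Q$. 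The dual statement holds for coproducts.

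Now for the first isomorphism: the Proposition immediately above says that
\[
c\multimap a \xleftarrow{\ c\multimap\pi^1_{a,b}\ } c\multimap(a\oplus b) \xrightarrow{\ c\multimap\pi^2_{a,b}\ } c\multimap b
\]
is a product diagram. On the other hand, applying the first Proposition of this section to the pair $c\multimap a$, $c\multimap b$ (every SMCB category is equipped with biproducts) shows that $(c\multimap a)\oplus(c\multimap b)$ with its projections is also a product of $c\multimap a$ and $c\multimap b$. By the uniqueness just recalled the two apices are isomorphic, which is the first claim. The second claim is obtained the same way from the coproduct halves: $c\otimes a \xrightarrow{\,c\otimes\iota^1_{a,b}\,} c\otimes(a\oplus b) \xleftarrow{\,c\otimes\iota^2_{a,b}\,} c\otimes b$ is a coproduct diagram, and so is $c\otimes a \xrightarrow{\,\iota^1\,} (c\otimes a)\oplus(c\otimes b) \xleftarrow{\,\iota^2\,} c\otimes b$, hence the apices agree up to isomorphism. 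For the last pair of isomorphisms I would use that $0$ is a zero object of $\mathcal{A}$ and that a zero object is unique up to (unique) isomorphism: if $0$ and $0'$ are both initial and terminal, the unique arrows $0\to 0'$ and $0'\to 0$ compose, on either side, to the unique endomorphism of a zero object, namely the identity. Since the Proposition asserts that $c\multimap 0$ and $c\otimes 0$ are zero objects, each is isomorphic to $0$, giving $c\multimap 0\cong 0\cong c\otimes 0$.

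The argument is entirely routine; the only point requiring any care is that the isomorphisms produced are the canonical ones coming from the universal properties, but that is precisely what the uniqueness arguments above deliver, so I do not expect a genuine obstacle.
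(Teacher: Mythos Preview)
Your proof is correct and matches the paper's intended argument: the corollary is stated there without proof, as an immediate consequence of the preceding Proposition via the standard uniqueness-up-to-isomorphism of (co)products and zero objects, which is exactly what you spell out.
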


With the help of the above isomorphisms, one derives the following equalities.
\begin{equation*}\label{25}
   f\otimes(g_1+g_2)=(f\otimes g_1)+(f\otimes g_2),\quad (f_1+f_2)\otimes g=(f_1\otimes g)+(f_2\otimes g),
\end{equation*}
\begin{equation*}\label{26}
   f\multimap(g_1+g_2)=(f\multimap g_1)+(f\multimap g_2),\quad (f_1+f_2)\multimap g=(f_1\multimap g)+(f_2\multimap g),
\end{equation*}
\begin{equation*}\label{27}
   f\otimes 0_{b,b'}=0_{a\otimes b,a'\otimes b'}=0_{a,a'}\otimes g,\quad
\end{equation*}
\begin{equation*}\label{28}
   f\multimap 0_{b,b'}=0_{a'\multimap b,a\multimap b'}=0_{a,a'}\multimap g.
\end{equation*}

\section{A free SMCB category}\label{free SMCB}

Our presentation of SMCB categories is purely equational. This enables one to construct a SMCB category $\mathcal{F}_P$ freely generated by an (infinite) set $P$. The \emph{objects} of $\mathcal{F}_P$ are the formulae built out of elements of $P$ and the constants $I$ and $0$, with the help of three binary connectives $\otimes$, $\oplus$ and $\multimap$. In order to obtain the arrows of $\mathcal{F}_P$, we start with \emph{primitive terms} which are of the form $\mj_a$, $\alpha_{a,b,c}$, $\lambda_a$, $\sigma_{a,b}$, $\eta_{a,b}$, $\varepsilon_{a,b}$, $\iota^i_{a,b}$, $\pi^i_{a,b}$ and $0_{a,b}$, for all objects $a$, $b$ and $c$ of  $\mathcal{F}_P$. The \emph{terms} are built out of primitive terms with the help of operational symbols $\otimes$, $\oplus$, $a\multimap$, for every object $a$ of $\mathcal{F}_P$, $+$ and $\circ$. (Each such term is equipped with the source and the target, which are objects of $\mathcal{F}_P$, and constructions of terms with $+$ and $\circ$ are restricted to appropriate sources and targets.) These terms are quotient by the congruence generated by the equalities \ref{1}-\ref{22}. Hence, an arrow of $\mathcal{F}_P$ is the equivalence class of a term.

Let \textbf{Smcb} be the category whose objects are SMCB categories and whose arrows are functors strictly preserving the SMCB structure. The forgetful functor from \textbf{Smcb} to the category $\set$ of sets and functions, which maps a SMCB category to the set of its objects, has a left adjoint, the ``free'' functor $F$. Our category $\mathcal{F}_P$ is the image $FP$ of the set $P$ under the functor $F$.

\begin{definition}\label{proj-inj}
By induction on the complexity of an object $a$ of $\mathcal{F}_P$, we define two finite sequences ${\rm I}_a=\langle\iota^0_a,\ldots,\iota^{n-1}_a\rangle$ and $\Pi_a=\langle\pi^0_a,\ldots,\pi^{n-1}_a\rangle$ of arrows of $\mathcal{F}_P$ in the following way. If $a$ is an element of $P$ or either the constant $I$ or $0$, then $n=1$ and ${\rm I}_a=\langle\mj_a\rangle=\Pi_a$. Let us assume that ${\rm I}_{a_1}=\langle\iota^0_1,\ldots,\iota^{n_1-1}_1\rangle$, $\Pi_{a_1}=\langle\pi^0_1,\ldots,\pi^{n_1-1}_1\rangle$ and ${\rm I}_{a_2}=\langle\iota^0_2,\ldots,\iota^{n_2-1}_2\rangle$, $\Pi_{a_2}=\langle\pi^0_2,\ldots,\pi^{n_2-1}_2\rangle$ are already defined.
\begin{itemize}
\item[$\otimes$] If $a=a_1\otimes a_2$, then $n=n_1\cdot n_2$, and for $0\leq i< n_1\cdot n_2$,
    \[
    \iota^i_a=\iota^{\lfloor i/n_2\rfloor}_1\otimes \iota^{i\text{ mod}\, n_2}_2,\quad\quad \pi^i_a=\pi^{\lfloor i/n_2\rfloor}_1\otimes \pi^{i\text{ mod}\, n_2}_2.
    \]
\item[$\multimap$] If $a=a_1\multimap a_2$, then $n=n_1\cdot n_2$, and for $0\leq i< n_1\cdot n_2$,
    \[
    \iota^i_a=\pi^{\lfloor i/n_2\rfloor}_1\multimap \iota^{i\text{ mod}\, n_2}_2,\quad\quad \pi^i_a=\iota^{\lfloor i/n_2\rfloor}_1\multimap \pi^{i\text{ mod}\, n_2}_2.
    \]
\item[$\oplus$] If $a=a_1\oplus a_2$, then $n=n_1+ n_2$, and for $0\leq i< n_1+ n_2$,
    \[ \iota^i_a= \left\{\begin{array}{ll}
    \iota^1_{a_1,a_2}\circ \iota^i_1, & 0\leq i<n_1,
    \\[1ex]
    \iota^2_{a_1,a_2}\circ \iota^{i-n_1}_2, & \mbox{\rm otherwise}, \end{array} \right . \quad\quad
    \pi^i_a= \left\{\begin{array}{ll}
    \pi^i_1\circ \pi^1_{a_1,a_2}, & 0\leq i<n_1,
    \\[1ex]
    \pi^{i-n_1}_2\circ \pi^2_{a_1,a_2}, & \mbox{\rm otherwise}. \end{array} \right .
\]
\end{itemize}
\end{definition}

\begin{rem}\label{oplus}
Note that when $a=a_1\oplus a_2$ we have that
\[
\iota^i_a=\iota^{1+s_i}_{a_1,a_2}\circ\iota^{i-n_1\cdot s_i}_{1+s_i},\quad\quad \pi^i_a= \pi^{i-n_1\cdot s_i}_{1+s_i}\circ \pi^{1+s_i}_{a_1,a_2},\quad {\rm where}\; s_i=\left\lfloor\frac{\min\{i, n_1\}}{n_1}\right\rfloor.
\]
\end{rem}

\begin{example} If $n_1=3$ and $n_2=2$, then
\begin{alignat*}{2}
& {\rm I}_{a_1\otimes a_2} &={}& \langle \iota^0_1\otimes\iota^0_2, \iota^0_1\otimes\iota^1_2, \iota^1_1\otimes\iota^0_2, \iota^1_1\otimes\iota^1_2, \iota^2_1\otimes\iota^0_2, \iota^2_1\otimes\iota^1_2\rangle,
\\[2ex]
& \Pi_{a_1\otimes a_2} &={}& \langle \pi^0_1\otimes\pi^0_2, \pi^0_1\otimes\pi^1_2, \pi^1_1\otimes\pi^0_2, \pi^1_1\otimes\pi^1_2, \pi^2_1\otimes\pi^0_2, \pi^2_1\otimes\pi^1_2\rangle,
\\[2ex]
& {\rm I}_{a_1\multimap a_2}  &={}& \langle \pi^0_1\multimap\iota^0_2, \pi^0_1\multimap\iota^1_2, \pi^1_1\multimap\iota^0_2, \pi^1_1\multimap\iota^1_2, \pi^2_1\multimap\iota^0_2, \pi^2_1\multimap\iota^1_2\rangle,
\\[2ex]
& \Pi_{a_1\multimap a_2}  &={}& \langle \iota^0_1\multimap\pi^0_2, \iota^0_1\multimap\pi^1_2, \iota^1_1\multimap\pi^0_2, \iota^1_1\multimap\pi^1_2, \iota^2_1\multimap\pi^0_2, \iota^2_1\multimap\pi^1_2\rangle,
\\[2ex]
& {\rm I}_{a_1\oplus a_2}  &={}& \langle \iota^1_{a_1,a_2}\circ\iota^0_1, \iota^1_{a_1,a_2}\circ\iota^1_1, \iota^1_{a_1,a_2}\circ\iota^2_1, \iota^2_{a_1,a_2}\circ\iota^0_2, \iota^2_{a_1,a_2}\circ\iota^1_2\rangle,
\\[2ex]
& \Pi_{a_1\oplus a_2}  &={}& \langle \pi^0_1\circ\pi^1_{a_1,a_2}, \pi^1_1\circ\pi^1_{a_1,a_2}, \pi^2_1\circ\pi^1_{a_1,a_2}, \pi^0_2\circ\pi^2_{a_1,a_2}, \pi^1_2\circ\pi^2_{a_1,a_2}\rangle.
\end{alignat*}
\end{example}

\begin{example} Let $x=(a\oplus b)\oplus c$ and $y=((a\oplus b)\oplus c)\otimes (c\oplus d)$, where $a,b,c,d$ are elements of $P$. Then $\iota_x^i$ for $0\leq i<3$ and $\iota_y^j$ for $0\leq j<6$ are given in the following tables.
\begin{table}[h]
\begin{minipage}{0.49\linewidth}\centering
\begin{TAB}[3pt]{|c|c|}{|c|c|c|}
$\iota_{x}^0$  & $\iota_{a\oplus b,c}^1\circ \iota^1_{a,b}$ \\
$\iota_{x}^1$  & $\iota_{a\oplus b,c}^1\circ \iota^2_{a,b}$ \\
$\iota_{x}^2$  & $\iota_{a\oplus b,c}^2$ \\
\end{TAB}
\end{minipage}
\begin{minipage}{0.49\linewidth}\centering
\begin{TAB}[3pt]{|c|c|}{|c|c|c|c|c|c|}
$\iota_{y}^0$ & $(\iota_{a\oplus b,c}^1\circ \iota^1_{a,b})\otimes \iota_{c,d}^1$ \\
$\iota_{y}^1$ & $(\iota_{a\oplus b,c}^1\circ \iota^1_{a,b})\otimes \iota_{c,d}^2$ \\
$\iota_{y}^2$ & $(\iota_{a\oplus b,c}^1\circ \iota^2_{a,b})\otimes \iota_{c,d}^1$ \\
$\iota_{y}^3$ & $(\iota_{a\oplus b,c}^1\circ \iota^2_{a,b})\otimes \iota_{c,d}^2$ \\
$\iota_{y}^4$ & $\iota^2_{a\oplus b,c}\otimes \iota^1_{c,d}$ \\
$\iota_{y}^5$ & $\iota^2_{a\oplus b,c}\otimes \iota^2_{c,d}$ \\
\end{TAB}
\end{minipage}
\end{table}
\end{example}

\begin{rem}\label{3.1}
When $a$ is built out of elements of $P$ using only $\oplus$, the sequence $\mathrm{I}_a$ ($\Pi_a$) consists of all the injections (projections) of the atoms of $a$, while this is not true when $a$ contains $\oplus$ in the scope of $\otimes$ (or $\multimap$). For every $0\leq i<n$, the target of $\iota^i_a$ and the source of $\pi^i_a$ are both equal to $a$, while the source $a^i$ of $\iota^i_a$ is equal to the target of $\pi^i_a$, and $a^i$ is $\oplus$-free. Moreover, if $a$ is $\oplus$-free, then ${\rm I}_a=\langle\mj_a\rangle=\Pi_a$.
\end{rem}

The following proposition has a straightforward proof.

\begin{prop}\label{4.2}
For every object $a$ of $\mathcal{F}_P$
\[
\pi^j_a\circ\iota^i_a= \left\{
\begin{array}{ll} \mj_{a^i}, & i=j,\\[1ex]
0_{a^i,a^j}, & \mbox{\rm otherwise},
\end{array} \right . \quad\quad\quad \sum_{i=0}^{n-1} \iota^i_a\circ\pi^i_a =\mj_a.
\]
\end{prop}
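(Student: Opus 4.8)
The plan is to proceed by induction on the complexity of the object $a$, following exactly the case split ($a\in P\cup\{I,0\}$; $a=a_1\otimes a_2$; $a=a_1\multimap a_2$; $a=a_1\oplus a_2$) used in the definition of the sequences ${\rm I}_a$ and $\Pi_a$. In the base case $a$ is $\oplus$-free, so $n=1$, ${\rm I}_a=\langle\mj_a\rangle=\Pi_a$ and $a^0=a$; then $\pi^0_a\circ\iota^0_a=\mj_a\circ\mj_a=\mj_a$ and $\sum_{i=0}^{0}\iota^i_a\circ\pi^i_a=\mj_a$, using only \ref{1}. For the inductive step I assume both identities hold for $a_1$ (with length $n_1$) and for $a_2$ (with length $n_2$).

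For $a=a_1\otimes a_2$ I write $i=q_1 n_2+r_1$ and $j=q_2 n_2+r_2$ with $0\le r_1,r_2<n_2$, so that $\iota^i_a=\iota^{q_1}_1\otimes\iota^{r_1}_2$ and $\pi^j_a=\pi^{q_2}_1\otimes\pi^{r_2}_2$. Using bifunctoriality of $\otimes$ (\ref{2}) I get $\pi^j_a\circ\iota^i_a=(\pi^{q_2}_1\circ\iota^{q_1}_1)\otimes(\pi^{r_2}_2\circ\iota^{r_1}_2)$; the induction hypothesis for $a_1$ and $a_2$ evaluates each factor, and $i=j$ iff $q_1=q_2$ and $r_1=r_2$, while if $i\neq j$ at least one factor is a $0$-arrow and $f\otimes 0=0=0\otimes g$ (the displayed consequences of the Corollary, equalities 25–28) collapses the tensor to $0_{a^i,a^j}$. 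For the sum, bilinearity of $\otimes$ over $+$ (again 25–28) lets me expand
\[
\sum_{i=0}^{n-1}\iota^i_a\circ\pi^i_a=\sum_{q=0}^{n_1-1}\sum_{r=0}^{n_2-1}(\iota^q_1\circ\pi^q_1)\otimes(\iota^r_2\circ\pi^r_2)=\Bigl(\sum_{q=0}^{n_1-1}\iota^q_1\circ\pi^q_1\Bigr)\otimes\Bigl(\sum_{r=0}^{n_2-1}\iota^r_2\circ\pi^r_2\Bigr)=\mj_{a_1}\otimes\mj_{a_2}=\mj_a.
\]
The case $a=a_1\multimap a_2$ is formally identical after noting that $\multimap$ is a bifunctor $\mathcal{A}^{op}\times\mathcal{A}\to\mathcal{A}$ which is additive in each variable (the $\multimap$-analogues of 25–28), with the contravariance in the first argument harmlessly swapping the roles of $\iota$ and $\pi$ there, which is exactly how ${\rm I}_a$ and $\Pi_a$ were defined.

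The case $a=a_1\oplus a_2$ is the one I expect to be the main obstacle, because one must combine the biproduct equalities \ref{13}--\ref{15} for the outer $\oplus$ with the induction hypothesis for the inner pieces. Write $n=n_1+n_2$ and, for $0\le i<n$, $s_i=\lfloor\min\{i,n_1\}/n_1\rfloor$ as in Remark \ref{oplus}, so $\iota^i_a=\iota^{1+s_i}_{a_1,a_2}\circ\iota^{i-n_1 s_i}_{1+s_i}$ and $\pi^i_a=\pi^{i-n_1 s_i}_{1+s_i}\circ\pi^{1+s_i}_{a_1,a_2}$. For $\pi^j_a\circ\iota^i_a$ there are two subcases: if $i$ and $j$ lie in the same block ($s_i=s_j$), then $\pi^{1+s_i}_{a_1,a_2}\circ\iota^{1+s_i}_{a_1,a_2}=\mj$ by \ref{13} and we are reduced to the induction hypothesis inside block $a_{1+s_i}$; if they lie in different blocks, $\pi^{1+s_j}_{a_1,a_2}\circ\iota^{1+s_i}_{a_1,a_2}=0$ by \ref{14} and \ref{18} forces the whole composite to $0_{a^i,a^j}$. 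For the sum I split $\sum_{i=0}^{n-1}\iota^i_a\circ\pi^i_a$ into the block $0\le i<n_1$ and the block $n_1\le i<n$; using \ref{17} and the bifunctoriality/additivity of nothing more than composition, each block telescopes via the induction hypothesis to $\iota^1_{a_1,a_2}\circ\bigl(\sum_{q}\iota^q_1\circ\pi^q_1\bigr)\circ\pi^1_{a_1,a_2}=\iota^1_{a_1,a_2}\circ\mj_{a_1}\circ\pi^1_{a_1,a_2}$ and similarly for the second block, and their sum is $\iota^1_{a_1,a_2}\circ\pi^1_{a_1,a_2}+\iota^2_{a_1,a_2}\circ\pi^2_{a_1,a_2}=\mj_{a\oplus b}$ by \ref{15}. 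The only care needed is bookkeeping of sources and targets (guaranteed $\oplus$-free by Remark \ref{3.1}) so that every $0$-arrow that appears carries the indices the statement predicts.
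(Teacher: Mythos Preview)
Your proof is correct and is precisely the argument the paper has in mind: the paper gives no proof at all, only the sentence ``The following proposition has a straightforward proof,'' and your induction on the complexity of $a$, following the case split in the definition of ${\rm I}_a$ and $\Pi_a$, is the straightforward proof.

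One minor remark: the distributivity and annihilation laws you cite as ``25--28'' are the four \emph{unnumbered} displayed equalities at the end of Section~\ref{sec3} (the labels are suppressed in the paper), so in a written-up version you should refer to them as the displayed consequences of the Corollary rather than by number. Everything else---the use of bifunctoriality of $\otimes$ and of the derived bifunctor $\multimap$, the appeal to \ref{13}--\ref{15} and \ref{17}--\ref{18} in the $\oplus$ case, and the bookkeeping via Remark~\ref{oplus}---is exactly right.
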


\begin{cor}\label{3.3}
For every object $a$ of $\mathcal{F}_P$, the cocone $(a,{\rm I}_a)$ together with the cone $(a,\Pi_a)$ make a biproduct.
\end{cor}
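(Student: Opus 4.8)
The plan is to derive Corollary~\ref{3.3} directly from Proposition~\ref{4.2} by recalling what a biproduct amounts to in this setting. By the definitions in Section~2, a biproduct of a finite family $\{a^i\mid 0\le i<n\}$ in a category with a zero object is a universal cocone $\{\iota^i\colon a^i\to B\}$ together with a universal cone $\{\pi^i\colon B\to a^i\}$ satisfying $\pi^j\circ\iota^i=\mj_{a^i}$ for $i=j$ and $\pi^j\circ\iota^i=0_{a^i,a^j}$ otherwise. Proposition~\ref{4.2} already gives us exactly these two equations for the families ${\rm I}_a$ and $\Pi_a$ over the index set $\{0,\dots,n-1\}$, with $B=a$, so the only thing left to check is the universality of the cocone $(a,{\rm I}_a)$ and of the cone $(a,\Pi_a)$.

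The key step is therefore to exhibit, for any test object $c$ and any family $f_i\colon a^i\to c$, a factorizing arrow through the cocone, and dually for the cone. I would take $h=\sum_{i=0}^{n-1} f_i\circ\pi^i_a\colon a\to c$ and verify $h\circ\iota^j_a=f_j$ using the first identity of Proposition~\ref{4.2} together with the bilinearity of composition over $+$ (equalities~\ref{17}) and the absorption law~\ref{18}: all cross terms $f_i\circ\pi^i_a\circ\iota^j_a$ with $i\ne j$ collapse to $f_i\circ 0_{a^j,a^i}=0_{a^j,c}$ and vanish, leaving $f_j\circ\mj_{a^j}=f_j$. For uniqueness, if $h'$ also satisfies $h'\circ\iota^j_a=f_j$ for all $j$, then using the second identity $\sum_i \iota^i_a\circ\pi^i_a=\mj_a$ we get
\[
h'=h'\circ\mj_a=h'\circ\sum_{i=0}^{n-1}\iota^i_a\circ\pi^i_a=\sum_{i=0}^{n-1}(h'\circ\iota^i_a)\circ\pi^i_a=\sum_{i=0}^{n-1}f_i\circ\pi^i_a=h,
\]
again invoking~\ref{17}. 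This is exactly the argument already used in the proof of the first Proposition of Section~3 for the binary case, now applied uniformly to the $n$-ary family. The dual computation, with $h=\sum_i \iota^i_a\circ g_i$ for a cone $\{g_i\colon c\to a^i\}$, establishes that $(a,\Pi_a)$ is a universal cone.

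There is no genuine obstacle here; the statement is essentially a repackaging of Proposition~\ref{4.2}. The only point requiring a word of care is that the family $\{a^i\}$ really is the family one wants the biproduct of — but Remark~\ref{3.1} guarantees that each $a^i$ is well defined as the common source of $\iota^i_a$ and target of $\pi^i_a$, and that when $a$ is $\oplus$-free the family is the singleton $\langle\mj_a\rangle$, so the biproduct is trivial, consistently with $n=1$. Hence the corollary follows at once, and I would keep the write-up to a single short paragraph citing Proposition~\ref{4.2} and the distributivity equalities~\ref{17}-\ref{18}.
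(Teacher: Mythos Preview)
Your proposal is correct and follows exactly the approach the paper has in mind: the corollary is stated without proof because it is the $n$-ary version of the argument already given for Proposition~3.1 (the first proposition of Section~\ref{sec3}), using the two identities of Proposition~\ref{4.2} in place of the binary biproduct equalities~\ref{13}--\ref{15}. Your write-up simply makes explicit the one-paragraph verification that the paper leaves to the reader.
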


\section{A matrix normalisation}\label{matrix}

Our next goal is to eliminate $\oplus$, $\iota$ and $\pi$ from every arrow of $\mathcal{F}_P$, whose source and target are $\oplus$-free. The following \emph{matrix normalisation} of terms provides a solution. An alternative solution could be obtained via procedure akin to Kleene's permutation of inference rules (see \cite{K52}). Namely, one could define a correspondence between $\iota$'s and $\pi$'s in arrows whose source and target are $\oplus$-free, and then bring, by permutations based on naturality and functoriality, a corresponding pair together, in order to be eliminated. However, we find the following procedure more elegant.

For every arrow $u\colon a\to b$ of $\mathcal{F}_P$, where ${\rm I}_a= \langle\iota^0_a, \ldots,\iota^{n-1}_a\rangle$, $\Pi_b=\langle\pi^0_b,\ldots,\pi^{m-1}_b\rangle$, let $M_u$ be the $m\times n$ matrix whose $ij$ entry is $\pi^i_b\circ u\circ \iota^j_a$.
Let $X_{m_1\times n_1}$ and $Y_{m_2\times n_2}$ be two matrices of arrows of $\mathcal{F}_P$. For $\bullet$ being $\otimes$ or $\multimap$, following the definition of the Kronecker product of matrices, let $K_\bullet(X,Y)$ be the $(m_1\cdot m_2) \times (n_1\cdot n_2)$ matrix whose $ij$ entry is
\[
x_{\lfloor i/m_2\rfloor,\lfloor j/n_2\rfloor} \bullet y_{i\text{ mod}\, m_2,j\text{ mod}\, n_2}.
\]
For example,
\[
K_\bullet\left(\left(\begin{array}{ccc} x_{00} & x_{01} & x_{02}
\\
x_{10} & x_{11} & x_{12} \end{array}\right), \left(\begin{array}{cc} y_{00} & y_{01}
\\
y_{10} & y_{11} \end{array}\right)\right)
\]
is
\[
\left(\begin{array}{cccccc}
x_{00}\bullet y_{00} & x_{00}\bullet y_{01} & x_{01}\bullet y_{00} & x_{01}\bullet y_{01} & x_{02}\bullet y_{00} & x_{02}\bullet y_{01}
\\
x_{00}\bullet y_{10} & x_{00}\bullet y_{11} & x_{01}\bullet y_{10} & x_{01}\bullet y_{11} & x_{02}\bullet y_{10} & x_{02}\bullet y_{11}
\\
x_{10}\bullet y_{00} & x_{10}\bullet y_{01} & x_{11}\bullet y_{00} & x_{11}\bullet y_{01} & x_{12}\bullet y_{00} & x_{12}\bullet y_{01}
\\
x_{10}\bullet y_{10} & x_{10}\bullet y_{11} & x_{11}\bullet y_{10} & x_{11}\bullet y_{11} & x_{12}\bullet y_{10} & x_{12}\bullet y_{11}
\end{array} \right).
\]

For two such matrices $X$ and $Y$, we define
\[
X\otimes Y=_{df}K_\otimes (X,Y),\quad\quad\quad X\multimap Y=_{df} K_\multimap(X^T,Y),
\]
while $X\oplus Y$ is the direct sum
\[
\left(\begin{array}{cc} X & 0 \\ 0 & Y\end{array}\right)
\]
of $X$ and $Y$. If $X$ and $Y$ are of the same type having the corresponding elements in the same hom-sets, then $X+Y$ is the matrix of the same type whose $ij$ entry is $x_{ij}+y_{ij}$. If $X_{m\times p}$ and $Y_{p\times n}$, and for every $0\leq i<m$, $0\leq j<n$ the compositions $x_{ik}\circ y_{kj}$ are defined for every $0\leq k<p$, and belong to the same hom-set, then we define $X\circ Y$ as the $m\times n$ matrix whose $ij$ entry is $\sum_{k=0}^{p-1} x_{ik}\circ y_{kj}$.

\begin{prop}\label{matrix1}
For $\bullet$ being $\otimes$, $\multimap$, $\oplus$, $+$ and $\circ$, we have
\[
M_{u_1\bullet u_2}=M_{u_1}\bullet M_{u_2}.
\]
\end{prop}
\begin{proof} For the first three cases below, let us assume that $u_i\colon a_i\to b_i$ and that $M_{u_i}$ is an $m_i\times n_i$ matrix, where $i\in\{1,2\}$.

\medskip
\noindent (1) If $\bullet$ is $\otimes$, then we have
\begin{align*}
(M_{u_1\otimes u_2})_{i,j} &= \pi_{b_1\otimes b_2}^i\circ (u_1\otimes u_2)\circ \iota_{a_1\otimes a_2}^j \\
&=(\pi_1^{\lfloor \frac{i}{m_2}\rfloor}\otimes \pi_2^{i\;\mathrm{mod}\;m_2}) \circ (u_1\otimes u_2) \circ (\iota_1^{\lfloor \frac{j}{n_2}\rfloor}\otimes \iota_2^{j\;\mathrm{mod}\;n_2}) \\
&= (\pi_1^{\lfloor \frac{i}{m_2}\rfloor} \circ u_1\circ \iota_1^{\lfloor \frac{j}{n_2}\rfloor})\otimes (\pi_2^{i\;\mathrm{mod}\;m_2}\circ u_2\circ \iota_2^{j\;\mathrm{mod}\;n_2}) \\
&= (M_{u_1}\otimes M_{u_2})_{i,j}.
\end{align*}

\medskip
\noindent (2) We proceed analogously when $\bullet$ is $\multimap$.
(Note that, for the sake of Corollary~\ref{4.3}, since $a_1\multimap u_2=\mj_{a_1}\multimap u_2$, it suffices here to consider just the case when $u_1$ is $\mj_{a_1}$.)

\medskip
\noindent (3) If $\bullet$ is $\oplus$, then, by relying on Remark~\ref{oplus}, we have
\begin{align*}
(M_{u_1\oplus u_2})_{i,j} &= \pi_{1+s_i}^{i-m_1\cdot s_i}\circ \pi_{b_1,b_2}^{1+s_i} \circ (u_1\oplus u_2)\circ \iota_{a_1,a_2}^{1+s_j}\circ \iota_{1+s_j}^{j-n_1\cdot s_j} \\
&= \pi_{1+s_i}^{i-m_1\cdot s_i} \circ u_{1+s_i}\circ \pi_{a_1,a_2}^{1+s_i} \circ \iota_{a_1,a_2}^{1+s_j}\circ \iota_{1+s_j}^{j-n_1\cdot s_j} \\
&= \begin{cases}
\pi_1^{i}\circ u_1\circ \iota_1^{j}, & 0\leq i< m_1, \; 0\leq j<n_1, \\
\pi_2^{i-m_1}\circ u_2\circ \iota_2^{j-n_1}, & m_1\leq i<m_1+m_2, \; n_1\leq j<n_1+n_2, \\
0, & \text{otherwise},
\end{cases} \\
&=(M_{u_1}\oplus M_{u_2})_{i,j}.
\end{align*}

\medskip
\noindent (4) If $\bullet$ is $+$, and $u_1,u_2\colon a\to b$, then we have
\begin{align*}
(M_{u_1+u_2})_{i,j} &= \pi_b^i\circ (u_1+u_2)\circ \iota_a^j \\
&= \pi_b^i\circ u_1\circ \iota_a^j + \pi_b^i\circ u_2\circ \iota_a^j \\
&= (M_{u_1}+M_{u_2})_{i,j}.
\end{align*}

\medskip
\noindent (5) If $\bullet$ is $\circ$, and $u_1:b\to c$, $u_2:a\to b$, while $M_{u_1}$ is a $k\times m$ and $M_{u_2}$ is an $m\times n$ matrix, then, by relying on Proposition~\ref{4.2}, we have
\begin{align*}
(M_{u_1}\circ M_{u_2})_{i,j}&=\sum_{l=0}^{m-1} \pi_c^i\circ u_1\circ \iota_b^{l}\circ \pi_{b}^{l}\circ u_2\circ \iota_a^j \\
&= \pi_c^i\circ u_1\circ \left[\sum_{l=0}^{m-1} \iota_b^{l}\circ \pi_{b}^{l}\right] \circ u_2\circ \iota_a^j \\
&= \pi_c^i\circ u_1\circ u_2\circ \iota_a^j
= (M_{u_1\circ u_2})_{i,j}.
\end{align*}

\vspace{-1.7em}
\end{proof}

\begin{prop}\label{matrix2}
If $u$ is of the form $\mj_a$, $\alpha_{a,b,c}$, $\lambda_a$, $\sigma_{a,b}$, $\eta_{a,b}$, $\varepsilon_{a,b}$, $\iota^i_{a,b}$, $\pi^i_{a,b}$ or $0_{a,b}$, then all the entries of the matrix $M_u$ are of the form $\mj_p$, $\alpha_{p,q,r}$, $\lambda_p$, $\sigma_{p,q}$, $\eta_{p,q}$, $\varepsilon_{p,q}$ and $0_{p,q}$, where $p$ and $q$ are $\oplus$-free.
\end{prop}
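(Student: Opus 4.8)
The plan is to argue by cases on the form of the generator $u\colon a\to b$. Fix $i$ and $j$; the $ij$ entry of $M_u$ is the term $\pi^i_b\circ u\circ\iota^j_a$, whose source $a^j$ and target $b^i$ are already $\oplus$-free by Remark~\ref{3.1}, so the only thing to establish is that, modulo the congruence generated by \ref{1}--\ref{22}, this term equals an atomic term of one of the listed shapes. The uniform mechanism is the following. First unfold $\iota^j_a$ and $\pi^i_b$ using the inductive definitions of the sequences ${\rm I}_a$ and $\Pi_b$ (and Remark~\ref{oplus} when $\oplus$ occurs at the top of $a$ or $b$); each becomes a combination, built with $\otimes$, $\multimap$ and $\circ$, of instances of $\mj$, $\iota^k_{-,-}$ and $\pi^k_{-,-}$. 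Then slide $u$ across these combinations: for $u$ equal to $\alpha$, $\lambda$, $\sigma$ use the naturality squares \ref{5}, \ref{6}, \ref{7}; for $u=\eta_{a,b},\varepsilon_{a,b}$ use \ref{8}, \ref{9} together with the dinaturality equations \ref{23}, \ref{24}; in all cases use the functoriality equations \ref{2}--\ref{4} to recombine the outcome into a single factor which, by those same equations, is an $\otimes$/$\multimap$-combination of terms $\pi^k_c\circ\iota^l_c$. Finally invoke Proposition~\ref{4.2}: each $\pi^k_c\circ\iota^l_c$ is either $\mj_{c^k}$ or $0_{c^l,c^k}$. If all of them are identities, then by \ref{1}--\ref{4} the whole factor is an identity and the entry reduces to a copy of $u$ with $\oplus$-free arguments, i.e.\ an atomic term of the same shape as $u$; if at least one of them is $0$, then the zero-absorption identities---$0\circ f=f\circ 0=0$ (equation~\ref{18}), $0\otimes g=f\otimes 0=0$ and $0\multimap g=f\multimap 0=0$---propagate to make the entry $0_{a^j,b^i}$.

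I would carry out the cases in roughly increasing difficulty. The cases $u=\mj_a$ and $u=0_{a,b}$ are immediate from Proposition~\ref{4.2} and from \ref{18}, respectively. The cases $u=\iota^1_{a,b},\iota^2_{a,b},\pi^1_{a,b},\pi^2_{a,b}$ are short: here $\oplus$ sits at the top of the source or the target, the relevant $\iota^j$ or $\pi^i$ is assembled from $\iota^1_{a,b},\iota^2_{a,b}$ or $\pi^1_{a,b},\pi^2_{a,b}$ by Remark~\ref{oplus}, so the entry contains a factor $\pi^s_{a,b}\circ\iota^t_{a,b}$ which is $\mj$ or a zero by \ref{13}--\ref{14}, and one reads off that $M_{\iota^1_{a,b}}$ and $M_{\pi^1_{a,b}}$ are the ``block identity'' matrices of Example~\ref{ex6}, with entries only of the form $\mj_p$ and $0_{p,q}$. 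For $u=\alpha_{a,b,c},\lambda_a,\sigma_{a,b}$ one unfolds $\iota^j$ and $\pi^i$ on the $\otimes$-structured ends into iterated tensors of $\iota^k_c$'s and $\pi^k_c$'s, moves the structural arrow outward by the corresponding naturality square (obtaining $\oplus$-free subscripts), collapses the remainder into one tensor of terms $\pi^k_c\circ\iota^l_c$ by \ref{2}, and applies Proposition~\ref{4.2}.

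The genuine obstacle is the pair $u=\eta_{a,b}$ and $u=\varepsilon_{a,b}$, where $\multimap$ occurs both covariantly and contravariantly and no single naturality square applies. For $\eta_{a,b}\colon b\to a\multimap(a\otimes b)$ one has $\pi^i_{a\multimap(a\otimes b)}=\iota^p_a\multimap(\pi^q_a\otimes\pi^r_b)$ and $\iota^j_b$ an injection of $b$; I would factor $\iota^p_a\multimap(\pi^q_a\otimes\pi^r_b)$ through intermediate arrows one variable at a time, using \ref{2}--\ref{4} and the bifunctoriality of $\multimap$, so that \ref{8} eliminates the covariant $b$-slot, then \ref{23} (dinaturality in $a$) eliminates the covariant $a$-slot, leaving the factor $(\pi^q_a\circ\iota^p_a)\multimap(a^q\otimes b^r)$ in front of $\eta_{a^q,b^r}\circ\pi^r_b$; Proposition~\ref{4.2} turns $\pi^q_a\circ\iota^p_a$ into $\mj$ (when $p=q$) or a zero (otherwise), and the remaining $\pi^r_b\circ\iota^j_b$ is handled the same way, yielding the entry $\eta_{a^p,b^r}$ or $0$. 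The case of $\varepsilon_{a,b}$ is dual, using \ref{9} and \ref{24}. The only delicate bookkeeping is the choice of which of \ref{8}/\ref{9} versus \ref{23}/\ref{24} to apply, in which order, and the repeated appeal to bifunctoriality of $\multimap$; everything else is routine rewriting.
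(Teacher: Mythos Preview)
Your proposal is correct and follows essentially the same case-by-case strategy as the paper's own proof: both unfold $\iota^j_a$ and $\pi^i_b$ via the inductive definitions, use the relevant naturality or dinaturality equation to slide the generator past them, collapse via functoriality, and finish with Proposition~\ref{4.2}. The only cosmetic difference is in the $\eta$ case, where the paper first absorbs $\iota^j_b$ using \ref{8} and then applies \ref{23}, while you first push $\pi^r_b$ out with \ref{8} and then apply \ref{23}; the two orderings lead to the same conclusion.
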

\begin{proof} (1) If $u$ is $\mj_a$, then the $ij$ entry of the matrix $M_u$ is
\[
(M_u)_{i,j}=\pi_a^i\circ \mj_a \circ \iota_a^j=\begin{cases}
\mathbf{1}_{a^i}, & i=j,  \\
0_{a^j, a^i}, & \text{otherwise}.
\end{cases}
\]

\medskip
\noindent (2) If $u$ is $\alpha_{a,b,c}$, then for some $i_1, i_2, i_3$ and $j_1, j_2, j_3$
\begin{align*}
(M_u)_{i,j}&=\pi_{(a\otimes b)\otimes c}^i\circ \alpha_{a,b,c}\circ \iota_{a\otimes (b\otimes c)}^j \\ &= ((\pi_{a}^{i_1}\otimes \pi_{b}^{i_2})\otimes \pi_{c}^{i_3}) \circ \alpha_{a,b,c} \circ (\iota_{a}^{j_1}\otimes (\iota_{b}^{j_2}\otimes \iota_{c}^{j_3})) \\
&= \begin{cases}
\alpha_{a^{i_1},b^{i_2}, c^{i_3}}, & i_1=j_1, \; i_2=j_2, \; i_3=j_3, \\
0_{a^{j_1}\otimes(b^{j_2}\otimes c^{j_3}), (a^{i_1}\otimes b^{i_2})\otimes c^{i_3}}, & \text{otherwise}.
\end{cases}
\end{align*}

\medskip
\noindent (3) We proceed analogously when $u$ is $\lambda_a$ or $\sigma_{a,b}$.

\medskip
\noindent (4) If $u$ is $\eta_{a,b}$, then for some $i_1$, $i_2$, $i_3$, by using \ref{8} and \ref{23} we have
\begin{align*}
(M_u)_{i,j} &= \pi_{a\multimap(a\otimes b)}^i\circ \eta_{a,b}\circ \iota_b^j=(\iota_a^{i_1}\multimap (\pi_a^{i_2}\otimes \pi_{b}^{i_3})) \circ \eta_{a,b}\circ {\iota_b^{j}} \\
&= ((\pi_a^{i_2}\circ \iota_a^{i_1})\multimap (a^{i_2}\otimes (\pi_b^{i_3}\circ \iota_b^{j}))) \circ \eta_{a^{i_2}, b^{j}} \\
&=\begin{cases}
\eta_{a^{i_1}, b^{j}}, & i_1=i_2,\; i_3=j, \\
0_{b^j, a^{i_1}\multimap (a^{i_2}\otimes b^{i_3})}, & \text{otherwise}.
\end{cases}
\end{align*}

\medskip
\noindent (5) We proceed analogously when $u$ is $\varepsilon_{a,b}$.

\medskip
\noindent (6) If $u$ is $\iota_{a,b}^{1}$, then
$(M_u)_{i,j}=\pi_{a\oplus b}^i\circ \iota_{a,b}^{1} \circ \iota_{a}^{j}$, which is either $\pi^{i_1}_a\circ \pi^1_{a,b} \circ \iota_{a,b}^{1}\circ \iota_{a}^{j}$ for some $i_1$, or
$\pi_{b}^{i_2}\circ \pi^2_{a,b} \circ \iota_{a,b}^{1}\circ \iota_{a}^{j}$, for some $i_2$. Moreover,
\[
\pi^{i_1}_a\circ \pi^1_{a,b} \circ \iota_{a,b}^{1}\circ \iota_{a}^{j}=
\begin{cases}
\mathbf{1}_{a^j}, & j=i_1, \\
0_{a^j, a^{i_1}}, & \text{otherwise},
\end{cases}
\quad\quad
\pi_{b}^{i_2}\circ \pi^2_{a,b} \circ \iota_{a,b}^{1}\circ \iota_{a}^{j}=
0_{a^j,b^{i_2}}.
\]

\medskip
\noindent (7) We proceed analogously when $u$ is $\iota_{a,b}^{2}$, $\pi_{a,b}^1$ or $\pi_{a,b}^2$.

\medskip
\noindent (8) If $u$ is $0_{a,b}$, then $(M_u)_{i,j}=\pi_b^{i}\circ 0_{a,b}\circ \iota_a^j=0_{a^j, b^{i}}$.
\end{proof}

\begin{cor}\label{4.3}
Every entry of $M_u$ is expressible without using $\oplus$, $\iota$ and $\pi$.
\end{cor}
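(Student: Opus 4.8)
The plan is to prove, by induction on the construction of the term $u$, the statement that every entry of $M_u$ can be written using only the basic arrows $\mj_p$, $\alpha_{p,q,r}$, $\lambda_p$, $\sigma_{p,q}$, $\eta_{p,q}$, $\varepsilon_{p,q}$, $0_{p,q}$, with $p$, $q$, $r$ all $\oplus$-free, together with the operations $\otimes$, $\multimap$, $+$ and $\circ$. Call such a matrix of arrows \emph{reduced}. The corollary is exactly the claim that $M_u$ is reduced for every arrow $u$ of $\mathcal{F}_P$, so it is enough to check that reducedness is established for the generators and preserved by the operational symbols $\otimes$, $a\multimap(-)$, $\oplus$, $+$, $\circ$ out of which terms are built (with the primitive $a\multimap(-)$ being the subcase of $\multimap$ whose left argument is an identity).

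For the base case, when $u$ is one of $\mj_a$, $\alpha_{a,b,c}$, $\lambda_a$, $\sigma_{a,b}$, $\eta_{a,b}$, $\varepsilon_{a,b}$, $\iota^i_{a,b}$, $\pi^i_{a,b}$ or $0_{a,b}$, Proposition \ref{matrix2} is precisely the assertion that $M_u$ is reduced. For the inductive step, let $u=u_1\ast u_2$ with $\ast$ among $\otimes$, $\multimap$, $\oplus$, $+$, $\circ$ (the primitive operation $a\multimap u_2$ appearing as the subcase $\mj_a\multimap u_2$, whose left operand $M_{\mj_a}$ is reduced by the base case), and assume $M_{u_1}$ and $M_{u_2}$ are reduced. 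By Proposition \ref{matrix1}, $M_u=M_{u_1}\ast M_{u_2}$ with $\ast$ now denoting the corresponding matrix operation; I would then read off from the definitions of these operations that every entry of $M_{u_1}\ast M_{u_2}$ stays in the reduced fragment: for $\otimes$ and $\multimap$ an entry is $x\ast y$ for entries $x$, $y$ of $M_{u_1}$, $M_{u_2}$ (in the $\multimap$ case after a transposition, which only relocates entries); for $\oplus$ an entry is either an entry of $M_{u_1}$, an entry of $M_{u_2}$, or a zero arrow $0_{p,q}$ between $\oplus$-free objects; for $+$ it is $x+y$; and for $\circ$ it is a sum $\sum_k x_{ik}\circ y_{kj}$ of composites. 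In every case no $\oplus$, $\iota$ or $\pi$ is introduced, so $M_u$ is reduced, completing the induction.

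I do not anticipate a real obstacle: once Propositions \ref{matrix1} and \ref{matrix2} are available, the corollary is a bookkeeping structural induction. The one point that deserves a short verification is that the zero arrows filling the off-diagonal blocks of a direct sum $X\oplus Y$---and the zero arrows arising from the ``otherwise'' clauses already present in Propositions \ref{4.2} and \ref{matrix2}---are of the form $0_{p,q}$ with $p$ and $q$ among the $\oplus$-free objects $a^i$ guaranteed by Remark \ref{3.1}, so that these entries, too, lie within the reduced fragment; this is immediate from the indexing conventions.
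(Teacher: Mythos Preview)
Your proposal is correct and follows essentially the same approach as the paper: a structural induction on the term $u$, invoking Proposition~\ref{matrix2} for the generators and Proposition~\ref{matrix1} for the inductive step. The paper's own proof is simply the one-sentence version of what you have spelled out, and your extra care in handling $a\multimap(-)$ as $\mj_a\multimap(-)$ and in noting that the off-diagonal zeros in a direct sum have $\oplus$-free source and target is exactly the bookkeeping the paper leaves implicit.
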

\begin{proof}
Since $u$ is built out of terms of the form $\mj_a$, $\alpha_{a,b,c}$, $\lambda_a$, $\sigma_{a,b}$, $\eta_{a,b}$, $\varepsilon_{a,b}$, $\iota^i_{a,b}$, $\pi^i_{a,b}$ and $0_{a,b}$ with the help of $\otimes$, $a\multimap$, $\oplus$, $+$ and $\circ$, one has just to apply Propositions~\ref{matrix1} and \ref{matrix2}.
\end{proof}

\begin{cor}\label{4.4}
Every arrow of $\mathcal{F}_P$ whose source and target are $\oplus$-free is expressible without using $\oplus$, $\iota$ and $\pi$.
\end{cor}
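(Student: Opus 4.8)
The plan is to derive the statement as an immediate instance of Corollary~\ref{4.3}, by noting that when the source and target are $\oplus$-free the matrix $M_u$ has size $1\times 1$ and its only entry is $u$ itself. First I would invoke Remark~\ref{3.1}: since $a$ and $b$ are $\oplus$-free, we have ${\rm I}_a=\langle\mj_a\rangle=\Pi_a$ and ${\rm I}_b=\langle\mj_b\rangle=\Pi_b$, so the lengths $n$ and $m$ attached to $a$ and $b$ both equal $1$. Hence $M_u$ is a single-entry matrix, and that entry is
\[
\pi^0_b\circ u\circ\iota^0_a=\mj_b\circ u\circ\mj_a=u,
\]
the last equality by the identity laws~\ref{1}.

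Then I would apply Corollary~\ref{4.3} to this single entry: it provides a term representing $\pi^0_b\circ u\circ\iota^0_a$---equivalently $u$---built from $\mj$, $\alpha$, $\lambda$, $\sigma$, $\eta$, $\varepsilon$ and $0$ using only $\otimes$, $\multimap$, $+$ and $\circ$, i.e.\ without $\oplus$, $\iota$ and $\pi$, which is precisely the asserted form. I do not anticipate any genuine obstacle: all the substance has already been extracted in Propositions~\ref{matrix1} and \ref{matrix2} and packaged into Corollary~\ref{4.3}; the only observation needed is that matrix normalisation acts as the identity on arrows whose source and target are $\oplus$-free. This corollary thus completes the elimination of $\oplus$, $\iota$ and $\pi$ announced at the beginning of Section~\ref{matrix}, and it is the form in which the result will later be used in the coherence argument.
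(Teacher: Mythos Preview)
Your proposal is correct and follows essentially the same approach as the paper: invoke Remark~\ref{3.1} to see that $M_u$ is a $1\times 1$ matrix whose sole entry is $u$, then apply Corollary~\ref{4.3}. Your version is slightly more explicit in spelling out $\pi^0_b\circ u\circ\iota^0_a=\mj_b\circ u\circ\mj_a=u$, but the argument is the same.
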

\begin{proof}
If the source and the target of $u$ are $\oplus$-free, then by Remark~\ref{3.1}, the only entry of $M_u$ is $u$ itself and it remains to apply Corollary~\ref{4.3}.
\end{proof}

\section{The graphical language}

A special SMCB category, which serves as a model (or a graphical language) for the arrows of $\mathcal{F}_P$ is introduced in this section. The essential ingredient of this category is the category $1\cob$ described in Example~\ref{ex7}.

Let $1\cob^+$ be the category with the same objects as $1\cob$, while the arrows of $1\cob^+$ from $a$ to $b$ are the finite (possibly empty) multisets of arrows of $1\cob$ from $a$ to $b$. (We abuse the notation by using the set brackets $\{$, $\}$ for multisets.) The identity arrow $\mj_a\colon a\to a$ is the singleton multiset $\{\mj_a\colon a\to a\}$, while the composition of $\{f_j\colon a\to b\mid j\in J\}$ and $\{f_k\colon b\to c\mid k\in K\}$ is
\[
\{f_k\circ f_j\colon a\to c\mid j\in J, k\in K\}.
\]
The category $1\cob^+$ is enriched over the category \textbf{Cmd}. The addition on hom-sets is the operation + (disjoint union) on multisets and the neutral is the empty multiset.

Let $1\cob^\oplus$ be the biproduct completion of $1\cob^+$ constructed as in \cite[Section~5.1]{S07}. The objects of $1\cob^\oplus$ are the finite sequences $\langle a_0,\ldots,a_{n-1}\rangle$, $n\geq 0$, of objects $a_0,\ldots,a_{n-1}$ of $1\cob$. For example, $\langle++-+--,+,--+\rangle$ is an object of $1\cob^\oplus$. Note the distinction between the empty sequence $\emptyset$ and the sequence $\langle\emptyset\rangle$ whose only member is the empty sequence of oriented points. The arrows of $1\cob^\oplus$ from $\langle a_0,\ldots,a_{n-1}\rangle$ to $\langle b_0,\ldots,b_{m-1}\rangle$ are the $m\times n$ matrices whose $ij$ entry is an arrow of $1\cob^+$ from $a_j$ to $b_i$. This category has the role of graphical language for symmetric monoidal closed categories with biproducts.

\begin{rem}
The commutativity of diagrams is decidable in $1\cob^\oplus$.
\end{rem}

\begin{prop}\label{dagger}
The category $1\cob^\oplus$ is dagger compact closed with dagger bi\-products.
\end{prop}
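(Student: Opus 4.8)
The plan is to build the dagger compact closed structure on $1\cob^\oplus$ in two stages, mirroring its two-stage definition. \emph{First}, I would transport the dagger compact closed structure of $1\cob$ (Example~\ref{ex7}) along the multiset construction to $1\cob^+$. \emph{Then}, I would observe that $1\cob^\oplus$ is the biproduct completion of the $\cmd$-enriched dagger compact closed category $1\cob^+$, and that this completion preserves dagger compact closure while adding dagger biproducts; the second stage is precisely the construction of \cite[Section~5.1]{S07}, so one may either cite it or redo the verification, and I sketch the latter.

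For the first stage, all structure is defined pointwise. On $1\cob^+$ I put $\{f_i\}\otimes\{g_j\}=\{f_i\otimes g_j\}$ and $\{f_i\}^\dagger=\{f_i^\dagger\}$, take the monoidal unit to be the empty sequence, let the components of $\alpha$, $\lambda$, $\sigma$ and of $\eta$, $\varepsilon$ be the corresponding singleton arrows of $1\cob$, and keep $a^\ast$ as in $1\cob$. Bifunctoriality of $\otimes$, and the contravariant, involutive, identity-on-objects functoriality of $\dagger$, follow from the corresponding facts in $1\cob$ after expanding the multiset operations, using that $\circ$ and the addition $+$ (disjoint union of multisets) on $1\cob^+$ distribute over one another. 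Since every axiom of a dagger compact closed category in our list --- the coherence conditions \ref{19}-\ref{21}, the triangular equations \ref{triang}, and \ref{a}-\ref{c} --- is an equation between single arrows, it holds in $1\cob^+$ because it holds in $1\cob$. Hence $1\cob^+$ is a $\cmd$-enriched dagger compact closed category with strict symmetric monoidal structure.

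For the second stage, an arrow of $1\cob^\oplus$ is a matrix over $1\cob^+$ with matrix composition, entrywise $+$, the Kronecker product (as in Section~\ref{matrix}) for $\otimes$, the direct sum for $\oplus$, and the standard identity-block/zero-block injections $\iota^k$ and projections $\pi^k$. I would define the dagger of a matrix to be its transpose with $\dagger$ applied entrywise: this is again involutive, contravariant and identity-on-objects, and the transpose of an identity-block injection matrix is the corresponding projection matrix, so $\iota^k=(\pi^k)^\dagger$, i.e.\ the biproducts are dagger biproducts. The monoidal unit is $\langle\emptyset\rangle$, the dual of $\langle a_i\rangle_{i<n}$ is $\langle a_i^\ast\rangle_{i<n}$, and $\eta\colon\langle\emptyset\rangle\to\langle a_i^\ast\rangle\otimes\langle a_i\rangle$ and $\varepsilon$ are the column, resp.\ row, matrices carrying $\eta_{a_i}$, resp.\ $\varepsilon_{a_i}$, in the block indexed by $(i,i)$ and the empty multiset elsewhere. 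The coherence conditions \ref{19}-\ref{21} and the dagger-tensor axioms \ref{a}-\ref{b} then reduce, via associativity and interchange identities for Kronecker products of matrices, to the equations already established for $1\cob^+$ plus index bookkeeping; what remains is to verify the triangular equations \ref{triang} and the axiom \ref{c} for the matrix $\eta$, $\varepsilon$.

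The step I expect to be the main obstacle is exactly this last verification of \ref{triang} for the matrix-valued $\eta$ and $\varepsilon$. After expanding the Kronecker products together with the associator and symmetry matrices, one must see that all off-diagonal blocks contribute empty multisets because $\pi^k_a\circ\iota^l_a=0$ for $k\neq l$, that the index-matching produced by the sum over the contracted dimension is governed by $\sum_k\iota^k_a\circ\pi^k_a=\mj_a$, and that what survives is, block by block, precisely the triangular equation of $1\cob^+$ for each $a_i$. Once this bookkeeping is set up, \ref{c} and \ref{a} fall out from the entrywise definition of the dagger together with the fact that conjugate transposition of matrices over a dagger category is an involutive contravariant functor compatible with Kronecker products and with $\eta$, $\varepsilon$; alternatively, the entire second stage is the instance for $1\cob^+$ of the construction verified in \cite[Section~5.1]{S07}.
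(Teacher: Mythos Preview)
Your proposal is correct and follows essentially the same two-stage strategy as the paper: first lift the dagger compact closed structure from $1\cob$ to $1\cob^+$ pointwise on multisets, then invoke the biproduct completion of \cite[Section~5.1]{S07}. The paper's proof is terser---it simply cites \cite[Proposition~5.1]{S07} for the entire second stage rather than sketching the matrix verification of \ref{triang} and \ref{c}---and it singles out the compatibility conditions $(f+g)^\dagger=f^\dagger+g^\dagger$ and $0^\dagger=0$ on $1\cob^+$, which are the hypotheses Selinger's result needs and which you leave implicit; you may want to state these explicitly, but otherwise your argument matches.
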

\begin{proof}
The category $1\cob^{+}$ is dagger compact closed. For arrows $f$ and $g$ of $1\cob^{+}$ given by the multisets $\{f_i:a\to b\mid i\in I\}$ and $\{g_j:c\to d\mid j\in J\}$ respectively, we define $f\otimes g$ as $\{f_{i}\otimes g_j\mid i\in I, j\in J\}$. Similarly, $f^{\dagger}$ is defined as $\{f_i^\dagger:b\to a\mid i\in I\}$. This category is enriched over $\cmd$ as a compact closed category, and it is straightforward to check that, for $f,f'\colon a\to b$, we have $(f+f')^\dagger=f^\dagger+f'^\dagger$ and $0^\dagger=0$. Now the claim follows from \cite[Proposition~5.1]{S07}.
\end{proof}

\begin{cor}
The category $1\cob^\oplus$ is an SMCB category.
\end{cor}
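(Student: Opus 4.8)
The plan is to deduce the statement from Proposition~\ref{dagger} together with the general fact, noted at the end of Section~\ref{sec3}, that every symmetric monoidal closed category with biproducts carries an SMCB structure. By Proposition~\ref{dagger} the category $1\cob^\oplus$ is compact closed, hence (as recalled in Section~2) symmetric monoidal closed, and it has biproducts for every pair of objects; so it suffices to exhibit the SMCB data of Section~\ref{sec3} in $1\cob^\oplus$ and to verify the equalities \ref{1}--\ref{22}.

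First I would fix the data. The objects $I$ and $0$ are $\langle\emptyset\rangle$, the monoidal unit of $1\cob^\oplus$, and the empty sequence $\emptyset$, which is a zero object; $\otimes$ and $\oplus$ are the given bifunctors, with $\oplus$ being concatenation of sequences (block-diagonal sum on matrices); the arrows $\alpha,\lambda,\sigma$ come from the symmetric monoidal structure; the arrows $\iota^i_{a,b},\pi^i_{a,b},0_{a,b}$ are the injections, projections and zero matrices supplied by the biproduct completion; and $f_1+f_2$ is the sum in the $\cmd$-enrichment, i.e.\ the entrywise multiset union of matrices. For the closed structure I would use that $1\cob^\oplus$ is compact closed: set $a\multimap b=_{df}a^\ast\otimes b$ and $a\multimap g=_{df}a^\ast\otimes g$, and define $\eta_{a,b}\colon b\to a^\ast\otimes(a\otimes b)$ and $\varepsilon_{a,b}\colon a\otimes(a^\ast\otimes b)\to b$ by conjugating the compact-closed unit $\eta\colon I\to a^\ast\otimes a$ and counit $\varepsilon\colon a\otimes a^\ast\to I$ by the coherence isomorphisms $\alpha^{\pm1},\lambda^{\pm1}$; equivalently, take the unit and counit of the adjunction $a\otimes\dashv a^\ast\otimes$ guaranteed by Section~2.

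It then remains to check \ref{1}--\ref{22}. The equalities \ref{1}, \ref{2}, \ref{5}--\ref{7} and \ref{19}--\ref{21} hold because $1\cob^\oplus$ is symmetric monoidal; \ref{3} and \ref{4} are functoriality of $\oplus$ and of $a^\ast\otimes$; \ref{10}--\ref{11} and \ref{13}--\ref{15} are precisely the defining equations of a biproduct, valid in the biproduct completion; \ref{16}--\ref{18} hold because $1\cob^\oplus$ is $\cmd$-enriched (its hom-matrices are commutative monoids under multiset union, with the empty matrix as neutral, and composition is bilinear); and \ref{22} holds since a zero object has a unique endomorphism. Finally, \ref{8}--\ref{9} (naturality of $\eta,\varepsilon$) and \ref{12} (the triangular equalities) follow from the triangular identities \ref{triang} of the compact closed structure together with naturality and functoriality of $\otimes,\alpha,\lambda,\sigma$; equivalently, they are exactly the assertions that $\eta_{a,b}$ and $\varepsilon_{a,b}$ are the unit and counit of the adjunction $a\otimes\dashv a\multimap$, which is \cite[IV.1, Theorem~2]{ML71}.

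The only point requiring care, and the natural candidate for the main obstacle, is making the conjugated arrows $\eta_{a,b},\varepsilon_{a,b}$ and the derived functor action $a\multimap g$ explicit enough that \ref{8}, \ref{9} and \ref{12} can be checked by bookkeeping of coherence isomorphisms. Since this is a mechanical transfer of a known adjunction along coherence isomorphisms, no genuine difficulty arises, and in fact the whole verification collapses, via the remark at the end of Section~\ref{sec3}, to the single input Proposition~\ref{dagger}.
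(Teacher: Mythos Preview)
Your approach is correct and coincides with the paper's: the corollary is stated without proof there, as an immediate consequence of Proposition~\ref{dagger} together with the observation (Section~2) that compact closed implies symmetric monoidal closed and the remark at the end of Section~\ref{sec3} that every symmetric monoidal closed category with biproducts has the SMCB structure. Your elaboration of the data and the axiom-by-axiom check is more detail than the paper supplies, but it is accurate and follows the same line.
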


\section{Coherence}\label{scoh}

This section contains the main result of our paper. We start with some auxiliary notions. The $I$-\emph{valued} and $0$-\emph{valued} objects of $\mathcal{F}_P$ are inductively defined as follows.
\begin{itemize}
\item[(1)] $I$ is $I$-valued and $0$ is $0$-valued;
\item[(2)] $a\oplus b$ is $I$-valued when one of $a$ and $b$ is $I$-valued and the other is $0$-valued, and $a\oplus b$ is $0$-valued when both are $0$-valued;
\item[(3)] $a\otimes b$ ($a\multimap b$) is $I$-valued when both $a$ and $b$ are $I$-valued, and $a\otimes b$ ($a\multimap b$) is $0$-valued when at least one of $a$ and $b$ is $0$-valued.
\end{itemize}

\begin{rem}
If $a$ is $\oplus$-free and $I$-valued, then $a$ is built out of $I$, $\otimes$ and $\multimap$, only.
\end{rem}

An object $a$ of $\mathcal{F}_P$ is \emph{proper} when for every subformula of $a$ of the form $b\multimap c$, if $c$ is $I$-valued, then $b$ is either $I$-valued or $0$-valued. An object $a$ of $\mathcal{F}_P$ is $I$-\emph{proper} when for every subformula of $a$ of the form $b\multimap c$, if $c$ is $I$-valued, then $b$ is $I$-valued.

\begin{rem}\label{iota-pi proper}
By the definition of $\iota^i_a$ and $\pi^i_a$, we have that if $a$ is proper, then the source of $\iota^i_a$ (the target of $\pi^i_a$) is proper too.
\end{rem}

\begin{rem}\label{6.2}
If $a$ is proper and $\oplus$-free, then either it is a zero object, or it contains no $0$ and is $I$-proper.
\end{rem}

Consider the function $g$ from the set $P$ of generators of $\mathcal{F}_P$ to the objects of $1\cob^\oplus$ that maps every element of $P$ to the singleton sequence $\langle +\rangle$. Since $\mathcal{F}_P$ is a SMCB category freely generated by the set $P$, and $1\cob^\oplus$ is a SMCB category, there exists a unique SMCB functor (one that strictly preserves the SMCB structure) $G\colon \mathcal{F}_P\to 1\cob^\oplus$, which extends the function $g$. We call an arrow of $\mathcal{F}_P$, which is expressed in \emph{pure symmetric monoidal closed language} (free of $\oplus$,  $+$ and $0,\iota,\pi$-arrows) an SMC-\emph{arrow}. Note that if $u$ is an SMC-arrow, then $Gu$ corresponds to the Kelly-Mac Lane graph of $u$.

For the proof of Theorem~\ref{coh} below, we use the following version of the partial coherence theorem for symmetric monoidal closed categories proved by Kelly and Mac Lane \cite[Theorem~2.4]{KML71} (see also \cite[Section 1.1, second paragraph]{S97}).

\begin{thm}[SMC Coherence]\label{smccoh}
If $a$ and $b$ are $I$-proper and $f,g\colon a\to b$ are SMC-arrows such that $Gf=Gg$, then $f=g$.
\end{thm}

The following theorem is the main result of the paper.

\begin{thm}[SMCB Coherence]\label{coh}
If $a$ and $b$ are proper and $f,g\colon a\to b$ are arrows of $\mathcal{F}_P$ such that $Gf=Gg$, then $f=g$, i.e.\ the restriction of $G$ to the full subcategory of $\mathcal{F}_P$ on the set of proper objects is faithful.
\end{thm}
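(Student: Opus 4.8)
The plan is to reduce SMCB coherence to a known coherence result for the pure symmetric monoidal closed (SMC) fragment. The key idea is the matrix normalisation developed in Section~\ref{matrix}: given arrows $f,g\colon a\to b$ with $a,b$ proper, I would first reduce to the $\oplus$-free case. Since $G$ is an SMCB functor, it commutes with the formation of the sequences $\mathrm{I}_a$, $\Pi_a$ and hence with the passage to matrices; that is, $G$ applied to the matrix $M_f$ (whose entries, by Corollary~\ref{4.3}, are expressible in the pure SMC language) yields precisely the matrix representing $Gf$ in $1\cob^\oplus$. Therefore $Gf=Gg$ holds if and only if $G(M_f)_{ij}=G(M_g)_{ij}$ entrywise. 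By Proposition~\ref{4.2} (or Corollary~\ref{3.3}), $f$ and $g$ are recoverable from their matrices via $f=\sum_{i,j}\iota^i_b\circ(M_f)_{ij}\circ\pi^j_a$, so it suffices to prove $(M_f)_{ij}=(M_g)_{ij}$ for each $i,j$. Each such entry is an arrow whose source $a^j$ and target $b^i$ are $\oplus$-free (Remark~\ref{3.1}), and by the remark after the definition of proper objects, $a^j$ and $b^i$ are again proper. Thus the whole problem reduces to: for $\oplus$-free proper objects, $G$ restricted to the pure SMC language is faithful.

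For the $\oplus$-free proper case I would invoke the classical Kelly--Mac Lane coherence theorem for symmetric monoidal closed categories (\cite{KML71}, as corrected/refined in \cite{KL80}), using the identification noted just before the theorem statement that $Gu$ is the Kelly--Mac Lane graph of $u$ when $u$ is expressed in pure SMC language, and the fact (Corollary~\ref{4.4}) that every arrow between $\oplus$-free objects \emph{is} so expressible. However, Kelly--Mac Lane coherence is not unconditional --- it fails for arrows into objects built from $I$ (the central allowability/constant-killing issue). This is exactly where \emph{properness} earns its keep: Remark~\ref{6.2} tells us that an $\oplus$-free proper object is either a zero object or contains no $0$ and has the property that whenever $b\multimap c$ is a subformula with $c$ being $I$-valued, $b$ is $I$-valued. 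The zero-object case is trivial ($0$ is terminal, so $f=g=0_{a,0}$, and similarly when the source is a zero object). In the remaining case, $I$-valued subobjects are built from $I$ alone, so the object is, up to the SMC isomorphisms $\lambda$, essentially a ``constant-free'' object of the kind for which Kelly--Mac Lane coherence \emph{does} hold; one then transports faithfulness along these isomorphisms.

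The main obstacle I anticipate is making the last step rigorous: precisely matching our hypothesis of properness with the hypotheses under which the Kelly--Mac Lane graphical calculus is complete (their notion of allowable/constant morphisms, and the role of $\lambda$-induced collapses). Concretely, one must check that on proper $\oplus$-free objects two arrows with equal graphs are equal \emph{in the free SMC category} --- handling the subtlety that $I\multimap a\cong a$ and $a\multimap I$ can be terminal-like --- and that no information is lost when we strip off the $I$-valued parts. A secondary technical point is verifying carefully that $G$ genuinely commutes with matrix normalisation, i.e.\ that $G\iota^i_a$ and $G\pi^i_a$ are the coprojections/projections of the biproduct $G a=\langle a^0,\dots,a^{n-1}\rangle$ in $1\cob^\oplus$; this follows from $G$ being a strict SMCB functor together with Corollary~\ref{3.3}, but it should be stated explicitly. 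Once these two points are settled, faithfulness on proper objects follows by assembling the entrywise equalities back into $f=g$.
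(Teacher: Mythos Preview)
Your overall architecture matches the paper's: pass to matrix entries $(M_f)_{ij}=\pi^i_b\circ f\circ\iota^j_a$, reduce to $\oplus$-free proper sources and targets, and invoke Kelly--Mac Lane. But there is a genuine gap where you claim the entries lie in the pure SMC language. Corollaries~\ref{4.3} and~\ref{4.4} eliminate only $\oplus$, $\iota$ and $\pi$; they do \emph{not} eliminate $+$ or the zero arrows $0_{p,q}$ (the paper's definition of ``pure symmetric monoidal closed language'' explicitly excludes both $+$ and $0$). A matrix entry is therefore in general either $0_{a^j,b^i}$ or a finite sum $\sum_{k=1}^n f_k$ with each $f_k$ pure SMC, and Kelly--Mac Lane coherence says nothing about such sums directly. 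Your sentence ``every arrow between $\oplus$-free objects \emph{is} so expressible'' is exactly the overreach.

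The missing step, which the paper supplies, is to exploit the \emph{multiset} structure of $1\cob^+$: the image under $G$ of $\sum_k f_k$ is the multiset $\{Gf_k\mid 1\leq k\leq n\}$, and $G(0)$ is the empty multiset. Hence $G(M_f)_{ij}=G(M_g)_{ij}$ forces the two multisets of Kelly--Mac Lane graphs to coincide, so after reindexing $n=m$ and $Gf_k=Gg_k$ for each $k$. Only \emph{then} does \cite[Theorem~2.4]{KML71} apply, summand by summand (with Remark~\ref{6.2} supplying exactly the Kelly--Mac Lane properness hypothesis), giving $f_k=g_k$ and hence $(M_f)_{ij}=(M_g)_{ij}$. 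Incidentally, once Remark~\ref{6.2} is in hand the Kelly--Mac Lane hypotheses are met as stated; there is no need to ``strip off $I$-valued parts'' or transport along $\lambda$-isomorphisms as you propose.
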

\begin{proof}
Let us show that for every $\iota^i_a\in {\rm I}_a$ and  $\pi^j_b\in\Pi_b$, we have that $\pi^j_b\circ f\circ\iota^i_a=\pi^j_b\circ g\circ\iota^i_a$. By Corollary~\ref{4.4}, with the help of equalities~\ref{17}-\ref{18} and the equalities listed at the end of Section~\ref{sec3}, it follows that $\pi^j_b\circ f\circ\iota^i_a$ is either equal to $0_{a^i,b^j}$, or to $\sum_{k=1}^n f_k$, $n\geq 1$, where every $f_k$ is an SMC-arrow. By the same reasons, $\pi^j_b\circ g\circ\iota^i_a$ is either equal to $0_{a^i,b^j}$, or to $\sum_{k=1}^m g_k$, $m\geq 1$, where every $g_k$ is an SMC-arrow.

If $\pi^j_b\circ f\circ\iota^i_a=0_{a^i,b^j}$, then $G(\pi^j_b\circ f\circ\iota^i_a)$ is the empty multiset. From $Gf=Gg$ we conclude that $G(\pi^j_b\circ g\circ\iota^i_a)$ must be the empty multiset too, and since $G(\sum_{k=1}^m g_k)$, for $m\geq 1$, cannot be such, it follows that $\pi^j_b\circ g\circ\iota^i_a=0_{a^i,b^j}$. We proceed analogously when $\pi^j_b\circ g\circ\iota^i_a=0_{a^i,b^j}$.

If $\pi^j_b\circ f\circ\iota^i_a=\sum_{k=1}^n f_k$ and $\pi^j_b\circ g\circ\iota^i_a=\sum_{k=1}^m g_k$, for $n,m\geq 1$, where $f_k$ and $g_k$ are SMC-arrows, then from $Gf=Gg$, it follows that
\[
\{Gf_k\mid 1\leq k\leq n\}=G\sum_{k=1}^n f_k=G\sum_{k=1}^m g_k= \{Gg_k\mid 1\leq k\leq m\}.
\]
Hence, the multisets $\{Gf_k\mid 1\leq k\leq n\}$ and $\{Gg_k\mid 1\leq k\leq m\}$ have the same number of elements, i.e., $n=m$, and, without loss of generality, we may conclude that for every $1\leq k\leq n$, $Gf_k=Gg_k$. By Remark~\ref{iota-pi proper}, we have that the source and the target of $f_k$ and $g_k$ are proper, and since $f_k$ and $g_k$ are SMC-arrows, by Remark~\ref{6.2}, they are $I$-proper. From Theorem~\ref{smccoh}, it follows that for every $1\leq k\leq n$, $f_k=g_k$, and hence, $\pi^j_b\circ f\circ\iota^i_a=\pi^j_b\circ g\circ\iota^i_a$.

Since the above holds for every $\iota^i_a\in {\rm I}_a$ and $\pi^j_b\in\Pi_b$, by Corollary~\ref{3.3}, we conclude that $f=g$.
\end{proof}

\section{The case of compact closed categories with biproducts}

Along the lines of our proof of Theorem~\ref{coh}, one can prove an analogous result concerning compact closed categories with biproducts (CCB categories). The appropriate CCB language is obtained from the SMCB language as follows. The binary operation $\multimap$ on objects is replaced by the unary operation $^\ast$. The unary operations $a\multimap$ on arrows are omitted. The families of arrows with components $\eta_{a,b}$ and $\varepsilon_{a,b}$ are replaced by the families of arrows with components $\eta_a\colon I\to a^\ast\otimes a$ and $\varepsilon_a\colon a\otimes a^\ast\to I$. Eventually, the equalities \ref{4}, \ref{8}, \ref{9} and \ref{12} should be replaced by the equalities \ref{triang}.

Let $\mathcal{F}'_P$ be the CCB category freely generated by a set $P$, constructed with respect to the above language. For the function $g$ from $P$ to the set of objects of $1\cob^\oplus$ defined as in Section~\ref{scoh}, there is a unique CCB functor $G\colon \mathcal{F}'_P\to 1\cob^\oplus$, which extends the function $g$. We call an arrow of $\mathcal{F}'_P$, which is expressed in \emph{pure compact closed language} (free of $\oplus$,  $+$ and $0,\iota,\pi$-arrows) an CoC-\emph{arrow}. Note that if $u$ is a CoC-arrow, then $Gu$ corresponds to the Kelly-Laplaza graph of $u$ (see \cite{KL80}).

In order to prove a coherence result for CCB categories we need to introduce some auxiliary notions and to modify definitions and results given in Sections \ref{free SMCB} and~\ref{matrix}. The contravariant functor $^\ast$ is defined in the standard way---for $f\colon a\to b$,
\[
f^\ast=\lambda_{a^\ast}\circ \sigma_{a^\ast,I}\circ (a^\ast\otimes \varepsilon_b) \circ \alpha^{-1}_{a^\ast,b,b^\ast}\circ((a^\ast\otimes f)\otimes b^\ast)\circ(\eta_a\otimes b^\ast)\circ\lambda_{b^\ast}^{-1}.
\]
For every object $a$ of $\mathcal{F}'_P$, one can define the sequences  ${\rm I}_a$ and $\Pi_a$, by replacing the item $\multimap$ in Definition~\ref{proj-inj} by
\begin{itemize}
\item[$\ast$] If $a=a_1^\ast$, then $n=n_1$, and for $0\leq i< n_1$,
    $\iota^i_a=(\pi^i_1)^\ast$, $\pi^i_a=(\iota^i_1)^\ast$.
\end{itemize}
It is straightforward to check that Proposition~\ref{4.2} and Corollary~\ref{3.3}, with $\mathcal{F}_P$ replaced by $\mathcal{F}'_P$, remain to hold.

By omitting the case (2) in the proof of Proposition~\ref{matrix1} we obtain the analogous proposition for $\mathcal{F}'_P$.
\begin{prop}\label{matrix1'}
For $\bullet$ being $\otimes$, $\oplus$, $+$ and $\circ$, we have
\[
M_{u_1\bullet u_2}=M_{u_1}\bullet M_{u_2}.
\]
\end{prop}

The following proposition is analogous to Proposition~\ref{matrix2}.

\begin{prop}\label{matrix3}
If $u$ is of the form $\mj_a$, $\alpha_{a,b,c}$, $\lambda_a$, $\sigma_{a,b}$, $\eta_a$, $\varepsilon_a$, $\iota^i_{a,b}$, $\pi^i_{a,b}$ or $0_{a,b}$, then all the entries of the matrix $M_u$ are of the form $\mj_p$, $\alpha_{p,q,r}$, $\lambda_p$, $\sigma_{p,q}$, $\eta_p$, $\varepsilon_p$ and $0_{p,q}$, where $p$ and $q$ are $\oplus$-free.
\end{prop}
\begin{proof}
Just replace the cases (4) and (5) in the proof of Proposition~\ref{matrix2} by $(4')$ and $(5')$ below.

\medskip
\noindent $(4')$ If $u$ is $\eta_a$, then the matrix $M_u$ is a vector column and for some $i_1$, $i_2$ we have
\begin{align*}
(M_u)_{i,1} &= \pi_{a^\ast\otimes a}^i\circ \eta_a=((\iota_a^{i_1})^\ast\otimes \pi_a^{i_2}) \circ \eta_a \stackrel{\clubsuit}{=} ((a^{i_1})^\ast\otimes (\pi_a^{i_2}\circ \iota_a^{i_1}))\circ  \eta_{a^{i_1}} \\
&=\begin{cases}
\eta_{a^{i_1}}, & i_1=i_2, \\
0_{I, (a^{i_1})^\ast\otimes a^{i_2}}, & \text{otherwise}.
\end{cases}
\end{align*}
Note that $\clubsuit$ holds since for $f\colon a\to b$, we have
$(f^\ast\otimes b)\circ\eta_b=(a^\ast\otimes f)\circ \eta_a$,
which is derived essentially by the right hand side of \ref{triang}, with the help of \ref{2}, and symmetric monoidal coherence (see \cite[Section~5]{ML63}, \cite[XI.1, Theorem~1]{ML71} and \cite[Section~5.3]{DP04}).

\medskip
\noindent $(5')$ We proceed analogously when $u$ is $\varepsilon_a$.
\end{proof}

The following result is related to \cite[Theorem~21]{AD04} but it is formulated and proved in a different manner. As usually (cf.\ \cite[Theorem~3.1]{ML63} and \cite[Proposition~3]{S63}) it is difficult to give a full comparison of these two coherence results, even one may consider them as having the same ``mathematical content''.

\begin{thm}[CCB Coherence]\label{coh1}
The functor $G\colon\mathcal{F}'_P\to 1\cob^\oplus$ is faithful.
\end{thm}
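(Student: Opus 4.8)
The plan is to follow the proof of Theorem~\ref{coh} step by step, with the Kelly--Mac Lane coherence theorem of \cite{KML71} replaced by the Kelly--Laplaza coherence theorem \cite[Theorem~8.2]{KL80} for compact closed categories. First I would fix the contravariant functor $\ast\colon(\mathcal{F}'_P)^{op}\to\mathcal{F}'_P$ defined in the standard way out of $\eta_a$ and $\varepsilon_a$, and then redefine the sequences ${\rm I}_a$ and $\Pi_a$ for the objects of $\mathcal{F}'_P$: the clauses for $\otimes$ and $\oplus$ are copied verbatim, while the clause for $\multimap$ is replaced by a clause for $\ast$, namely, if $a=a_1^{\ast}$ then $n=n_1$ and, for $0\le i<n_1$,
\[
\iota^i_a=(\pi^i_1)^{\ast},\qquad \pi^i_a=(\iota^i_1)^{\ast}.
\]
The interchange of $\iota$ and $\pi$ reflects the contravariance of $\ast$ together with the fact that a contravariant functor carries a biproduct to a biproduct. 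With these definitions the analogues of Proposition~\ref{4.2} and Corollary~\ref{3.3}---that $(a,{\rm I}_a)$ together with $(a,\Pi_a)$ is a biproduct with $\oplus$-free components $a^i$---go through unchanged, now invoking the triangular equalities~\ref{triang} wherever~\ref{4}, \ref{8}, \ref{9} and~\ref{12} were used before.

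Next I would redo the matrix normalisation of Section~\ref{matrix}. For $u\colon a\to b$ let $M_u$ again be the matrix with $ij$ entry $\pi^i_b\circ u\circ\iota^j_a$. On matrices, $\otimes$, $\oplus$, $+$ and $\circ$ are defined exactly as before, while the operation induced by the dual simply transposes a matrix and applies $\ast$ entrywise; the analogue of Proposition~\ref{matrix1} then holds, the case of the dual being immediate from $\pi^i_{a^{\ast}}\circ u^{\ast}\circ\iota^j_{b^{\ast}}=(\pi^j_b\circ u\circ\iota^i_a)^{\ast}$. For the analogue of Proposition~\ref{matrix2} one checks that when $u$ is one of $\mj_a$, $\alpha_{a,b,c}$, $\lambda_a$, $\sigma_{a,b}$, $\eta_a$, $\varepsilon_a$, $\iota^i_{a,b}$, $\pi^i_{a,b}$ or $0_{a,b}$, every entry of $M_u$ is again of the form $\mj_p$, $\alpha_{p,q,r}$, $\lambda_p$, $\sigma_{p,q}$, $\eta_p$, $\varepsilon_p$ or $0_{p,q}$ with $p,q$ being $\oplus$-free; the genuinely new cases are $u=\eta_a$ and $u=\varepsilon_a$, where, expanding $\pi^i_{a^{\ast}\otimes a}=(\iota^{\lfloor i/n\rfloor}_a)^{\ast}\otimes\pi^{i\bmod n}_a$ (resp.\ $\iota^i_{a\otimes a^{\ast}}$) according to the definitions above, one uses~\ref{triang} and the dinaturality of $\eta,\varepsilon$ it yields to see that the entry equals $\eta_{a^k}$ (resp.\ $\varepsilon_{a^k}$) when the two index blocks agree and $0$ otherwise. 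The analogues of Corollaries~\ref{4.3} and~\ref{4.4} then follow: every arrow of $\mathcal{F}'_P$ with $\oplus$-free source and target is expressible in pure compact closed language.

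With these preparations the faithfulness of $G$ copies the proof of Theorem~\ref{coh}. Given $f,g\colon a\to b$ with $Gf=Gg$, I would fix $\iota^i_a$ and $\pi^j_b$; by the analogue of Corollary~\ref{4.4} the arrow $\pi^j_b\circ f\circ\iota^i_a$ is either $0_{a^i,b^j}$ or a sum $\sum_{k=1}^n f_k$ with $n\ge1$ and each $f_k$ in pure compact closed language, and likewise for $g$. Applying $G$ and using that $G$ sends a pure compact closed arrow to its Kelly--Laplaza graph, the zero case for $f$ forces the zero case for $g$, and otherwise the equality of multisets $\{Gf_k\}=\{Gg_k\}$ gives $n=m$ and, after reordering, $Gf_k=Gg_k$ for all $k$; then \cite[Theorem~8.2]{KL80} yields $f_k=g_k$, hence $\pi^j_b\circ f\circ\iota^i_a=\pi^j_b\circ g\circ\iota^i_a$. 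As this holds for all $i$ and $j$, the biproduct decomposition (the analogue of Corollary~\ref{3.3}) gives $f=g$.

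I expect the main obstacle to be the $\eta_a$ and $\varepsilon_a$ cases of the matrix normalisation: since $\ast$ is contravariant, the bookkeeping with the indices $\lfloor i/n\rfloor$ and $i\bmod n$ in the expansion of $\pi^i_{a^{\ast}\otimes a}$ must be done carefully, and extracting $\eta_{a^k}$ requires exactly the right interplay of dinaturality and~\ref{triang}. One welcome simplification, by contrast, is that no properness hypothesis is needed here: in the absence of $\multimap$, every $\oplus$-free object of $\mathcal{F}'_P$ is either isomorphic to $0$ or contains no occurrence of $0$, and the $0$-free $\oplus$-free objects are precisely the objects of the free compact closed category, for which \cite[Theorem~8.2]{KL80} holds without restriction; thus Remark~\ref{6.2} has no counterpart in this argument, which is why the statement of Theorem~\ref{coh1} carries no restriction on objects.
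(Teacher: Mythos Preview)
Your proposal is correct and follows exactly the route the paper sketches: define the contravariant functor $^\ast$, replace the $\multimap$-clause in the definition of ${\rm I}_a$, $\Pi_a$ by your $^\ast$-clause, rerun the matrix normalisation, and invoke \cite[Theorem~8.2]{KL80} in place of \cite[Theorem~2.4]{KML71}. In fact the paper gives only that one-sentence outline for Theorem~\ref{coh1}, so your write-up---including the explicit $\iota^i_{a^\ast}=(\pi^i_1)^\ast$, the handling of the $\eta_a$, $\varepsilon_a$ entries via dinaturality, and the explanation of why the properness restriction disappears---supplies precisely the details the paper leaves to the reader.
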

\begin{proof}
We start with $f,g\colon a\to b$ such that $Gf=Gg$ and proceed as in the proof of Theorem~\ref{coh} until we get that $\pi^j_b\circ f\circ\iota^i_a=\sum_{k=1}^n f_k$ and $\pi^j_b\circ g\circ\iota^i_a=\sum_{k=1}^n g_k$, for $n\geq 1$, where $f_k$ and $g_k$ are CoC-arrows and for every $1\leq k\leq n$, $Gf_k=Gg_k$. By relying on \cite[Theorem~8.2]{KL80}, we conclude that for every $1\leq k\leq n$, $f_k=g_k$, and hence, $\pi^j_b\circ f\circ\iota^i_a=\pi^j_b\circ g\circ\iota^i_a$. It remains to apply Corollary~\ref{3.3} with $\mathcal{F}_P$ replaced by $\mathcal{F}'_P$.
\end{proof}

Concerning the case of dagger compact closed categories with dagger biproducts (DCCB categories), the appropriate language is obtained from the CCB language by the following modifications. A unary operation $^\dagger$ on arrows is added. The families of arrows $\alpha^{-1}$, $\lambda^{-1}$, $\eta$ and $\iota$ are omitted. The equalities $f^{\dagger\dagger}=f$, \ref{a}, \ref{b} (the third one) and \ref{c} are added, and the arrows $\alpha^{-1}_{a,b,c}$, $\lambda^{-1}_a$, $\eta_a$, $\iota^i_{a,b}$ are replaced by $\alpha_{a,b,c}^\dagger$, $\lambda_a^\dagger$, $\sigma_{a,a^\ast}\circ \varepsilon_a^\dagger$, $(\pi^i_{a,b})^\dagger$ in the equalities assumed for CCB categories.

Let $\mathcal{F}''_P$ be the DCCB category freely generated by a set $P$. Since it is also a CCB category, there is a unique CCB functor $G'\colon \mathcal{F}'_P\to \mathcal{F}''_P$, which extends the identity function on $P$. This functor is an isomorphism that is identity on objects. On the other hand, there is a unique DCCB functor $G''\colon \mathcal{F}''_P\to 1\cob^\oplus$, which extends the function $g$. By the uniqueness, the above $G$ is equal to the composition $G''\circ G'$, and since $G$ is faithful, and $G'$ is an isomorphism, we have the following result.

\begin{thm}[DCCB Coherence]\label{coh2}
The functor $G''\colon\mathcal{F}''_P\to 1\cob^\oplus$ is faithful.
\end{thm}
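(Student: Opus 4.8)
The plan is to obtain Theorem~\ref{coh2} as a formal consequence of Theorem~\ref{coh1}, exactly along the lines already sketched in the paragraph preceding the statement, turning those remarks into an argument. The only genuinely new content is the comparison between the freely generated CCB category $\mathcal{F}'_P$ and the freely generated DCCB category $\mathcal{F}''_P$, together with a check that the relevant diagram of functors commutes. So first I would set up the three functors: the unique DCCB functor $G''\colon\mathcal{F}''_P\to 1\cob^\oplus$ extending $g$ (which exists because $1\cob^\oplus$ is DCCB by Proposition~\ref{dagger} and $\mathcal{F}''_P$ is free as a DCCB category), the unique CCB functor $G\colon\mathcal{F}'_P\to 1\cob^\oplus$ extending $g$ from Theorem~\ref{coh1}, and the unique CCB functor $G'\colon\mathcal{F}'_P\to\mathcal{F}''_P$ extending the identity on $P$ (which exists because $\mathcal{F}''_P$, having more structure, is in particular a CCB category and $\mathcal{F}'_P$ is free as a CCB category).

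The heart of the argument is that $G'$ is an isomorphism which is the identity on objects. For this I would construct an inverse $H\colon\mathcal{F}''_P\to\mathcal{F}'_P$ and show $H\circ G'=\mathrm{id}$ and $G'\circ H=\mathrm{id}$. The point is that in the CCB language one can \emph{define} all the operations and arrows of the DCCB language: the dagger $^\dagger$ is definable on arrows of a compact closed category with biproducts by the standard recipe (transposition via $\eta_a,\varepsilon_a$ on the pure compact closed part, extended to matrices by transposing the matrix and daggering entrywise, i.e.\ dualising each entry), and one verifies that with this definition the DCCB axioms $f^{\dagger\dagger}=f$, \eqref{a}, the third equation of \eqref{b}, and \eqref{c} all hold in any CCB category — this is exactly the content of the claim ``every compact closed category with biproducts carries a canonical DCCB structure'', and it makes $\mathcal{F}'_P$ into a DCCB category. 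By freeness of $\mathcal{F}''_P$ as a DCCB category there is then a unique DCCB functor $H\colon\mathcal{F}''_P\to\mathcal{F}'_P$ extending $\mathrm{id}_P$. Since $G'$ is a CCB functor and $H\circ G'$ is a CCB functor $\mathcal{F}'_P\to\mathcal{F}'_P$ extending $\mathrm{id}_P$, uniqueness in the universal property of $\mathcal{F}'_P$ gives $H\circ G'=\mathrm{id}$; symmetrically, $G'\circ H$ is a DCCB endofunctor of $\mathcal{F}''_P$ extending $\mathrm{id}_P$, so $G'\circ H=\mathrm{id}$. Both composites being identities, $G'$ is an isomorphism, and it is identity on objects because on generators it is and it preserves all the object-forming operations strictly.

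Next I would verify $G=G''\circ G'$. Both sides are CCB functors $\mathcal{F}'_P\to 1\cob^\oplus$ — $G''\circ G'$ because $G''$ is DCCB hence in particular CCB, and $G'$ is CCB — and both extend $g$ on $P$; by the universal property of $\mathcal{F}'_P$ as a free CCB category, they coincide. Finally, combining the pieces: $G$ is faithful by Theorem~\ref{coh1}, and $G=G''\circ G'$ with $G'$ an isomorphism, so $G''$ is faithful, which is the assertion of Theorem~\ref{coh2}.

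The main obstacle is the verification, in the second step, that the CCB-definable dagger really satisfies all the DCCB equations — in particular \eqref{c}, $\sigma_{a,a^\ast}\circ\varepsilon_a^\dagger=\eta_a$, which under the definition $\varepsilon_a^\dagger := $ (transpose of $\varepsilon_a$) must be reconciled with the CCB-defined dagger of a matrix on the biproduct-completed side, and the compatibility \eqref{a} with $\otimes$ of arrows given as Kronecker products of matrices. This is the same style of bookkeeping as in Sections~\ref{matrix} and the proof of Proposition~\ref{dagger}, using the triangular identities \eqref{triang} and the biproduct identities of Proposition~\ref{4.2}; it is routine but needs care, and it is really the only place where something must be checked rather than quoted.
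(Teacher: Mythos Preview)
Your overall plan matches the paper's argument: factor $G=G''\circ G'$ via the universal property of $\mathcal{F}'_P$, note that $G'$ is an isomorphism, and conclude from Theorem~\ref{coh1}. The paper simply asserts that $G'$ is an isomorphism; you attempt to justify this, and that is where the gap lies.

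Your construction of the inverse $H$ rests on the claim that every CCB category carries a canonical dagger, built by ``transposition via $\eta_a,\varepsilon_a$'' and ``dualising each entry''. This does not typecheck: the compact closed transpose of $f\colon a\to b$ is an arrow $b^\ast\to a^\ast$, not $b\to a$, so what you describe is not a dagger at all (in $\mathcal{F}'_P$ the summands $a^i$ are not self-dual, e.g.\ $p\neq p^\ast$ for $p\in P$). The general claim is also false---$\fdvect_K$ is CCB but admits no canonical dagger without a choice of inner products---so the isomorphism $\mathcal{F}'_P\cong\mathcal{F}''_P$ is a phenomenon of the \emph{free} categories, not a comparison of doctrines. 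The correct route to $H$ is syntactic: define $(-)^\dagger$ by recursion on CCB \emph{terms}, sending each primitive arrow to the CCB arrow dictated by the DCCB axioms ($\alpha^\dagger=\alpha^{-1}$, $\lambda^\dagger=\lambda^{-1}$, $\sigma_{a,b}^\dagger=\sigma_{b,a}$, $\varepsilon_a^\dagger=\sigma_{a^\ast,a}\circ\eta_a$, $\eta_a^\dagger=\varepsilon_a\circ\sigma_{a^\ast,a}$, $(\pi^i)^\dagger=\iota^i$, $(\iota^i)^\dagger=\pi^i$, $0_{a,b}^\dagger=0_{b,a}$, $\mj^\dagger=\mj$) and setting $(g\circ f)^\dagger=f^\dagger\circ g^\dagger$ and $(f\ast g)^\dagger=f^\dagger\ast g^\dagger$ for $\ast\in\{\otimes,\oplus,+\}$. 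One then checks that this operation sends each CCB axiom to a derivable CCB equation (so it descends from terms to arrows of $\mathcal{F}'_P$) and that the resulting structure satisfies the DCCB axioms; after that your freeness argument for $H\circ G'=\mathrm{id}$ and $G'\circ H=\mathrm{id}$ is correct as written.
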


\section{Switching between graphical languages}

This section serves just as a sketch of our programme for a future work. It contains no precisely formulated results and is far from being self-contained.

The graphical language for symmetric monoidal closed categories consists of Kelly-Mac Lane graphs, i.e.\ the arrows of $1\cob$. On the other hand, an appropriate graphical language for biproducts is the one given in \cite[Section~6.3]{S09}, which may be formalised through the category $\mat_{\mathbb{N}}$ (see Example~\ref{ex6}). These two graphical languages do not cooperate well, as it was noted in \cite[Section~3, last paragraph]{S07}. Our solution of SMCB coherence relies on a construction based on the category $1\cob$ and it is reasonable to ask whether this coherence could be obtained by relying on the category $\mat_{\mathbb{N}}$ instead.

One way to switch from the graphical language based on $1\cob$ to the one based on $\mat_{\mathbb{N}}$ is to use 1-dimensional topological quantum field theories, which are all (with minor provisos) faithful according to \cite{T19}. In particular, the proof of the main result of \cite{DKP06} could be modified in order to show that there is a faithful functor from a monoidal closed category (without symmetry) with biproducts freely generated by a set of objects to the category $\mat_{\mathbb{N}}$. This functor strictly preserves the structure of monoidal closed categories with biproducts. In order to construct such a functor, we start with one defined as $G\colon \mathcal{F}_P\to 1\cob^\oplus$ in Section~\ref{scoh}, save that now its source is a monoidal closed category with biproducts freely generated by a set of objects. By composing such $G$ with a functor obtained as a modification of Brauer's representation of Brauer's algebras (see \cite{B37}, \cite{W88}, \cite{J94} and \cite{DP12}) one obtains the desired faithful functor. The existence of such a functor in presence of symmetry is still an open problem for us.

\begin{center}\textmd{\textbf{Acknowledgements} }
\end{center}
\medskip
This work was supported by the Serbian Ministry of Education, Science and Technological Development through Mathematical Institute of the Serbian Academy of Sciences and Arts. The authors are grateful to the editor and to the anonymous referee for careful reading and valuable comments and remarks, which improved the presentation of the paper.


\begin{thebibliography}{99}

\bibitem{AC04} {\sc S.\ Abramsky} and {\sc B.\ Coecke}, {\it A categorical semantics of quantum protocols},     \textbf{\textit{Proceedings of the 19th Annual IEEE Symposium on Logic in Computer Science, LICS 2004}}, IEEE Computer Society Press, 2004, pp.\ 415-425

\bibitem{AD04} {\sc S.\ Abramsky} and {\sc R.\ Duncan}, {\it A categorical quantum logic}, \textbf{\textit{Proceedings of the 2nd International Workshop on Quantum Programming Languages, TUCS General Publication No 33}}, Turku Centre for Computer Science, 2004, pp.\ 3-20

\bibitem{BFSV} {\sc C.\ Balteanu, Z.\ Fiedorowicz, R.\ Schw\" anzl} and {\sc R.\ Vogt}, {\it Iterated monoidal categories}, \textbf{\textit{Advances in Mathematics}}, vol.\ 176 (2003), pp.\ 277-349

\bibitem{BIP19} {\sc Dj.\ Barali\' c}, {\sc J.\ Ivanovi\' c} and {\sc Z.\ Petri\' c}, {\it A simple permutoassociahedron}, \textbf{\textit{Discrete Mathematics}}, vol.\ 342 (2019), article 111591

\bibitem{PS} {\sc Dj.\ Barali\' c}, {\sc P-L.\ Curien}, {\sc M.\ Mili\' cevi\' c}, {\sc J.\ Obradovi\' c}, {\sc Z.\ Petri\' c}, {\sc M.\ Zeki\' c} and {\sc R.T.\ \v Zivaljevi\' c}, {\it Proofs and surfaces}, to apper in \textbf{\textit{Annals of Pure and Applied Logic}}

\bibitem{B37} {\sc R.\ Brauer}, {\it On algebras which are connected with semisimple continuous groups}, \textbf{\textit{Annals of Mathematics}}, vol.\ 38 (1937), pp.\ 857-872

\bibitem{DKP06} {\sc K. Do\v sen}, {\sc \v Z. Kovijani\' c} and {\sc Z. Petri\' c}, {\it A new proof of the faithfulness of Brauer's representation of Temperley-Lieb algebras}, \textbf{\textit{International Journal of Algebra and Computation}}, vol.\ 16 (2006), pp.\ 959-968

\bibitem{DP04} {\sc K. Do\v sen} and {\sc Z. Petri\' c}, \textbf{\textit{Proof-Theoretical Coherence}}, KCL Publications (College Publications), London, 2004 (revised version available at: \url{http://www.mi.sanu.ac.rs/~kosta/publications.htm})

\bibitem{DP12} --------, {\it Symmetric self-adjunctions and matrices}, \textbf{\textit{Algebra Colloquium}}, vol.\ 19 (Spec 1) (2012), pp.\ 1051-1082

\bibitem{H09} {\sc C.\ Heunen}, \textbf{\textit{Categorical Quantum Models and Logics}}, Pallas Publications---Amsterdam University Press, 2009

\bibitem{J94} {\sc V.F.R.\ Jones}, {\it A quotient of the affine Hecke algebra in the Brauer algebra}, \textbf{\textit{L'Enseignement Math\' ematique (2)}}, vol.\ 40 (1994), pp.\ 313-344

\bibitem{K93} {\sc M.\ Kapranov}, {\it The permutoassociahedron, Mac Lane's coherence theorem and asymptotic zones for the KZ equation}, \textbf{\textit{Journal of Pure and Applied Algebra}}, vol.\ 85 (1993) pp.\ 119-142

\bibitem{K72} {\sc G.M.\ Kelly}, {\it An abstract approach to coherence}, in G.M.\ Kelly et al., editors, \textbf{\textit{Coherence in Categories}}, Lecture Notes in Mathematics, vol.\ 281, Springer, Berlin, 1972, pp.\ 106-147

\bibitem{KL80} {\sc G.M.\ Kelly} and {\sc M.L.\ Laplaza}, {\it Coherence for compact closed categories}, \textbf{\textit{Journal of Pure and Applied Algebra}}, vol.\ 19 (1980), pp.\ 193-213

\bibitem{KML71} {\sc G.M.\ Kelly} and {\sc S.\ Mac Lane}, {\it Coherence in closed categories}, \textit{\textbf{Journal of Pure and Applied Algebra}}, vol.\ 1 (1971), pp.\ 97-140

\bibitem{K52} {\sc S.C.\ Kleene}, {\it Permutability of inferences in Gentzen's calculi LK and LJ}, in S.C.\ Kleene, \textbf{\textit{Two Papers on the Predicate Calculus}}, American Mathematical Society, 1952, pp.\ 1-26

\bibitem{K03} {\sc J.\ Kock}, \textbf{\textit{Frobenius Algebras and 2D Topological Quantum Field Theories}}, Cambridge University Press, Cambridge, 2003

\bibitem{L68} {\sc J.\ Lambek}, {\it Deductive systems and categories I: Syntactic calculus and residuated categories}, \textbf{\textit{Mathematical Systems Theory}}, vol.\ 2 (1968), pp.\ 287-318

\bibitem{ML63} {\sc S.\ Mac Lane}, {\it Natural associativity and commutativity}, \textbf{\textit{Rice University Studies, Papers in Mathematics}}, vol.\ 49 (1963), pp.\ 28-46

\bibitem{ML71} --------, \textbf{\textit{Categories for the Working Mathematician}}, Springer, Berlin, 1971 (expanded second edition, 1998)

\bibitem{MS07} {\sc L.\ M\' ehats} and {\sc S.V.\ Soloviev}, {\it Coherence in SMCCs and equivalences on derivations in IMLL with unit}, \textbf{\textit{Annals of Pure and Applied Logic}}, vol.\ 147 (2007), pp.\ 127-179

\bibitem{PT13} {\sc Z.\ Petri\' c} and {\sc T.\ Trimble}, {\it Symmetric bimonoidal intermuting categories and $\omega\times\omega$ reduced bar constructions}, \textbf{\textit{Applied Categorical Structures}}, vol.\ 22 (2014), pp.\ 467-499

\bibitem{S07} {\sc P.\ Selinger}, {\it Dagger compact closed categories and completely positive maps}, \textbf{\textit{Quantum Programming Languages}}, Electronic Notes in Theoretical Computer Science, vol.\ 170, Elsevier, 2007, pp.\ 139–163

\bibitem{S09} --------, {\it A survey of graphical languages for monoidal categories}, (B.\ Coecke editor) \textbf{\textit{New Structures for Physics}}, Lecture Notes in Physics 813, Springer, 2009, pp.\ 289-355

\bibitem{S90} {\sc S.V.\ Soloviev}, {\it On the conditions of full coherence in closed categories}, \textbf{\textit{Journal of Pure and Applied Algebra}}, vol.\ 69 (1990), pp.\ 301-329 (Russian version in \textbf {\textit {Matematicheskie metody postroeniya i analiza algoritmov}}, A.O. Slisenko and S.V. Soloviev, editors, Nauka, Leningrad, 1990, pp.\ 163-189)

\bibitem{S97} --------, {\it Proof of a conjecture of S. Mac Lane}, \textbf{\textit{Annals of Pure and Applied Logic}}, vol.\ 90 (1997), pp.\ 101-162

\bibitem{S63} {\sc J.D.\ Stasheff}, {\it Homotopy associativity of H-spaces, I, II}, \textbf{\textit{Transactions of the American Mathematical Society}}, vol.\ 108 (1963), pp.\ 275-292, 293-312

\bibitem{T19} {\sc S.\ Telebakovi\' c Oni\' c}, {\it On the Faithfulness of 1-dimensional Topological Quantum Field Theories}, to apper in \textbf{\textit{Glasnik Matematicki Series III}}

\bibitem{T10} {\sc V.G.\ Turaev}, \textbf{\textit{Quantum Invariants of Knots and 3-Manifolds}}, De Gruyter, Berlin/New York, 2010

\bibitem{V77} {\sc R. Voreadou}, \textbf{\textit{Coherence and Non-Commutative Diagrams in Closed Categories}}, Memoirs of the American Mathematical Society, no 182, American Mathematical Society, Providence, 1977

\bibitem{W88} {\sc H.\ Wenzl}, {\it On the structure of Brauer's centralizer algebras}, \textbf{\textit{Annals of Mathematics}}, vol.\ 128 (1988), pp.\ 173-193

\end{thebibliography}
\end{document}